 \newtheorem{theorem}{Theorem}[section]
\newtheorem{lemma}[theorem]{Lemma}
\newtheorem{cor}[theorem]{Corollary}
\newtheorem{example}[theorem]{Example}
\theoremstyle{remark}
\newtheorem{remark}[theorem]{\bf{Remark}}
 \numberwithin{equation}{section}
\begin{document}

%
%
%
%
%
%
%
%
%
\title [Upper bounds of numerical radius and $a$-numerical radius $\cdots$]{\Large{Upper bounds of numerical radius and $a$-numerical radius in $\mathcal{C}^*$-algebra setting using Orlicz functions}}

\author[Saikat Mahapatra]{Saikat Mahapatra}

\address{%
Department of Mathematics,\\ Jadavpur University, Kolkata 700032,\\ West Bengal, India.
}
\email{smpatra.lal2@gmail.com}

\thanks{The first author would like to thank UGC, Govt. of India for the financial support ( NTA
Ref. No. 211610170555) in the form of fellowship. The third author would like to thank the Science and Engineering Research Board, Govt. of India for the financial support (Grant SRG/2023/002420).}
\author{Riddhick Birbonshi }
\address{Department of Mathematics,\\ Jadavpur University, Kolkata 700032,\\ West Bengal, India.}
\email{riddhick.math@gmail.com}
\author{Arnab Patra}
\address{Department of Mathematics,\\  Indian Institute of Technology
Bhilai, 491001, \\Chhattisgarh, India.}
\email{arnabp@iitbhilai.ac.in}
\subjclass{46L05, 47A30, 47A12.}

\keywords{$\mathcal{C}^*$-algebra, $a$-numerical radius, Numerical radius, Inequalities, Orlicz function.}


\begin{abstract}
In this paper, several significant upper bounds for the numerical radius and $a$-numerical radius of an element in a $\mathcal{C}^*$-algebra are obtained using Orlicz functions. Many well-known results are obtained from our findings, depending on specific choices of Orlicz functions.
\end{abstract}

\maketitle
\section{Introduction and preliminaries}

Let $\mathcal{A} $ be a unital  $\mathcal{C}^*$-algebra with the unit denoted by  $\textbf{1}$ and $\mathcal{A}^*$ represents the dual space of $\mathcal{A}$. A linear functional $g\in \mathcal{A}^*$ is said to be positive if $g(x^*x)\ge 0$ for all $x\in \mathcal{A}$ and denoted by $g\ge 0$. By $\mathcal{S(A)}$ we denote the set of all states on $\mathcal{A}$, i.e., 
$\mathcal{S(A)}=\{g\in\mathcal{A}^* : g\ge 0 \hspace{0.15cm} and \hspace{0.15cm} g(\textbf{1})=1 \}$. The algebraic numerical range $V(x)$ and the algebraic numerical radius $v(x)$ of an element $x\in \mathcal{A} $ are defined respectively by \begin{equation*}
      V(x)=\{g(x):g\in \mathcal{S}\mathcal{(A)}\}, \hspace{0.15 cm}
   v(x)=\sup\{|z|:z\in V(x)\}.
\end{equation*} 
Let $\mathcal{B}(H)$ be the unital $\mathcal{C}^*$-algebra of all bounded linear operators defined on a complex separable Hilbert space $\big( H,\langle \cdot \rangle \big)$. In particular, if $\mathcal{A=B}(H)$   and $T\in \mathcal{B}(H)$  the spatial  numerical range and the spatial numerical radius of $T $ are denoted by $W(T)$ and $\omega(T)$ respectively. It is well known that $V(T)$ is the closure of $W(T)=\big\{ \big \langle Tx,x \big\rangle: x\in H,\|x\|=1\big\}$.

For any $x\in \mathcal{A}$ the fundamental inequality between numerical radius $v(x)$ and $\mathcal{C}^*$-norm  is as follows:
\begin{eqnarray}
    \frac{1}{2}\|x\|\le v(x) \le \|x\|. \label{in1}
\end{eqnarray}
A positive element $x\in \mathcal{A }$ is denoted by $x\ge 0$. To indicate that $x- y \ge 0$, we use $x\ge y$. Let  $\mathcal{A}^+$ denotes the set of all non-zero positive elements in $\mathcal{A}$. Now, for $a\in \mathcal{A}^{+}$ we define $\mathcal{S}_{a}\mathcal{(A)} =\{f\in \mathcal{A}^{*}:f\ge0,\hspace{0.07 cm}f(a)=1\}$ and it is nothing but 
$ \mathcal{S}_{a}\mathcal{(A)}= \big\{\frac{g}{g(a)}:g\in \mathcal{S(A)},\hspace{0.07 cm} g(a)\neq 0\big\}$.
 Throughout this paper, $a$ stands for a non-zero positive element of $\mathcal{A}$ unless otherwise specified. 
 Recently, $a$-numerical range and $a$-numerical radius of elements in unital $\mathcal{C}^*$-algebras were introduced and studied by Bourhim and Mabrouk in \cite{bourhim2021numerical}.
 The  $a$-numerical range and $a$-numerical radius of  an element $x\in \mathcal{A} $  are defined respectively, by 
\[V_{a}(x)=\{f(ax):f\in \mathcal{S}_{a}\mathcal{(A)}\},\hspace{0.15 cm}
   v_{a}(x)=\sup\{|z|:z\in V_{a}(x)\}. \] 
 In particular, when $a=\textbf{1}$, then $a$-numerical range and $a$-numerical radius of $x$ become algebraic numerical range and algebraic numerical radius of $x$ respectively, i.e., $V_a(x)=V(x)$ and 
$v_a(x)=v(x)$. These ideas were presented in  \cite{bourhim2021numerical}  as an extensions of the spatial $A$-numerical range and spatial $A$-numerical radius of a bounded linear operator $T\in \mathcal{B}(H)$ defined respectively by $W_A(T)=\big\{ \big \langle Tx,x \big\rangle_A: x\in H,\|x\|_A=1\big\}$ and $w_A(T)=\sup \{|z|: z\in W_A(T)\}$. Here $A$ be a positive bounded operator on a Hilbert space $(H,\langle \cdot\rangle)$ and $\|x\|_A^2=\langle x,x \rangle_A=\langle Ax,x\rangle$, $x\in H$.

 For an element $x\in \mathcal{A}$, let $\|x\|_{a}=\sup \{\sqrt{f(x^{*}ax}):f\in \mathcal{S}_{a}\mathcal{(A)}\}$. It is clear that  $\|x\|_{a}=0 $ if and only if $ax=0 $. Note that, it  may happen that $\|x\|_a=\infty$  for some $x\in \mathcal{A}$.
Let $\mathcal{A}^a$ denotes the set of all such elements $x\in \mathcal{A}$ such that $\|x\|_a< \infty.$ From \cite[Proposition 3.3]{bourhim2021numerical} it is known that $\|\cdot\|_a$ is a semi-norm on $\mathcal{A}^a$ and satisfies $\|xy\|_a\le \|x\|_a\|y\|_a$   for all $x,y\in \mathcal{A}^a$. Thus $\mathcal{A}^a$
 is a subalgebra of $\mathcal{A}$. 
For $x\in \mathcal{A}$, an element $x^{\#_a}\in \mathcal{A}$ is said to be an $a$-adjoint of $x$ if $ax^{\#_a}=x^*a.$ The existence and uniqueness of $a$-adjoint elements for $x \in \mathcal{A}$ are not assured. The collection of all elements in  $\mathcal{A}$  that have $a$-adjoints is denoted by $\mathcal{A}_a.$ That is 
\[\mathcal{A}_a=\{ x\in \mathcal{A}: \hspace{0.07 cm}there\hspace{0.1 cm} is\hspace{0.1 cm} x^{\#_a}\in \mathcal{A} \hspace{0.1 cm} such \hspace{0.1 cm} that \hspace{0.1 cm} ax^{\#_a}=x^*a \}. \]
Additionally, $\mathcal{A}_a$ is a subalgebra of $\mathcal{A}$ and 
$\mathcal{A}_a $ is a subset of  $\mathcal{A}^a$ (see \cite{bourhim2021numerical}).
If $x\in \mathcal{A}_a$ and $x^{\#_a}$ is an $a$-adjoint of $x$, then by \cite[Corollary 4.9]{bourhim2021numerical}\begin{equation*}\|x\|_a^2 =\|xx^{\#_a}\|_a=\|x^{\#_a}x\|_a=\|x^{\#_a}\|_a^2. \end{equation*}
An element $x \in \mathcal{A}$ is considered $a$-self adjoint if $ax$ is self-adjoint, i.e.,  $ax=x^*a$. It is noted that every 
 element $x$ in $\mathcal{A}_a$ can be expressed as $x=y+iz$, where $y$ and $z$ are $a$-self adjoint but this decomposition is not unique in general. In fact if $x^{\#_a}$ is an $a$-adjoint of $x$, then $x=\Re_a(x)+i\Im_a(x)$, where $ \Re_a(x)=\frac{x+x^{\#_a}}{2}$ and $\Im_a(x)=\frac{x-x^{\#_a}}{2i}$.  An element $x\in \mathcal{A} $ is said to be $a$-positive if $ax $ is positive, i.e., $ax\ge 0 $ and it is denoted by $x\ge_a 0$.\\
In \cite{bourhim2021numerical} , Bourhim and Mabrouk have shown that for any $x\in \mathcal{A}_a$, the inequality between $a$-numerical radius $v_a(x)$ and the semi-norm  $\|\cdot \|_a$ holds as below:
\begin{eqnarray}
    \frac{1}{2}\|x\|_a\le v_a(x) \le \|x\|_a. \label{in2}
\end{eqnarray}
  If $x $ is $a$-self-adjoint, the second part of the above inequality becomes equality and the first part of the above inequality becomes equality if $ax=0$.
 In \cite{mabrouk2023extension}, Mabrouk and Zamani further have given Some upper bounds of $a$-numerical radius, which are as follow:
\begin{eqnarray}
v_a^2(x)&\le& \frac{1}{2}\|xx^{\#_a}+x^{\#_a}x\|_a \label{in33}  \\
  v_a^2(x)&\le& \frac{1}{2}v_a(x^2)+\frac{1}{4}\|xx^{\#_a}+x^{\#_a}x\|_a \label{in3}
\end{eqnarray}
Clearly, inequality in (\ref{in33}) refines  the right side inequality  in (\ref{in2}) and the inequality in (\ref{in3}) is sharper than the inequality in (\ref{in33}) for the power inequality $v_a(x^2)\le v_a^2(x)$.
It is worth mentioning that several research articles exist in the literature that focus on the estimation of quantities $w(T)$, $w_A(T)$, $v(x)$, and $v_a(x)$. Here, we refer to the articles 
\cite{bhunia2020numerical, bhunia2021proper,bourhim2021numerical, dragomir2017generalizations, el2007numerical,feki2020note, feki2022some,kittaneh2005numerical, mabrouk2023extension,zamani2019characterization} and the books \cite{bhunia2022lectures,dragomir2013inequalities}.

In this article, by employing the concept of Orlicz functions, we obtain several upper bounds of $a$-numerical radius of an element in unital $\mathcal{C}^*$-algebras. In addition, few well-known results are established from our findings. Some examples are provided to supplement our results.

\section{Main Result}
Throughout this article, let $\mathcal{A}$ denotes a complex unital $\mathcal{C}^*$-algebra with unit $\textbf{1}$.
 We start this section with the following Lemmas which will be useful for our study.
 \begin{lemma} \cite{blackadar2006operator}
 Let $g$ be a non-zero positive linear functional on a $\mathcal{C}^*$-algebra $\mathcal{A}$. Then  $\big|g(x^*y)\big|^2\le g(x^*x)g(y^*y)$  for all  $x,y\in \mathcal{A}$.\label{gcauchy}\end{lemma}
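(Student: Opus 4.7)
The plan is to derive this from the standard Cauchy--Schwarz argument applied to the positive semi-definite sesquilinear form $(x,y)\mapsto g(x^*y)$ on $\mathcal{A}$.

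First I would establish that $g$ is Hermitian, i.e., $g(a^*)=\overline{g(a)}$ for every $a\in\mathcal{A}$. Writing $a=h+ik$ with $h,k$ self-adjoint, it suffices to show $g(h)\in\mathbb{R}$ when $h=h^*$. Applying continuous functional calculus to $t\mapsto t_{\pm}$ decomposes $h=h_+-h_-$ with $h_{\pm}\ge 0$, and positivity of $g$ gives $g(h_\pm)\ge 0$, hence $g(h)\in\mathbb{R}$. Consequently $g(y^*x)=\overline{g(x^*y)}$.

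Next, for an arbitrary $\lambda\in\mathbb{C}$, I would invoke positivity of $g$ on the element $(\lambda x+y)^*(\lambda x+y)\ge 0$ and expand, obtaining
\begin{equation*}
0\le g\bigl((\lambda x+y)^*(\lambda x+y)\bigr)=|\lambda|^2 g(x^*x)+\bar\lambda\, g(x^*y)+\lambda\,\overline{g(x^*y)}+g(y^*y).
\end{equation*}
If $g(x^*x)>0$, substituting $\lambda=-g(x^*y)/g(x^*x)$ collapses the right-hand side to $g(y^*y)-|g(x^*y)|^2/g(x^*x)$, and rearranging gives exactly $|g(x^*y)|^2\le g(x^*x)\,g(y^*y)$. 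If instead $g(x^*x)=0$, the displayed inequality reduces to $2\Re\!\bigl(\bar\lambda\, g(x^*y)\bigr)+g(y^*y)\ge 0$ for every $\lambda$, which forces $g(x^*y)=0$ (otherwise a suitable phase of $\lambda$ with $|\lambda|$ large makes the left side negative), and the claim holds trivially.

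There is no real obstacle: the only structural $\mathcal{C}^*$-algebraic input is the positive/negative-part decomposition of self-adjoint elements, used to prove the Hermitian property of $g$; every remaining step is a routine quadratic-form manipulation.
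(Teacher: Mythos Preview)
Your argument is the standard and correct proof of the Cauchy--Schwarz inequality for positive linear functionals on a $\mathcal{C}^*$-algebra. The paper does not supply its own proof of this lemma; it simply quotes the result from \cite{blackadar2006operator}, so there is nothing to compare against beyond noting that your proof is exactly the classical one found in that reference.
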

As a consequence of Lemma \ref{gcauchy} we have the following result.
 \begin{lemma}
 Let $f\in \mathcal{S}_a\mathcal{(A)}$ and  $x,y\in \mathcal{A}$, then $\big|f(x^*ay)\big|^2\le f(x^*ax)f(y^*ay)$.   \label{cauchy}   
 \end{lemma}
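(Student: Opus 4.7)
The plan is to reduce the claim directly to Lemma \ref{gcauchy} by absorbing the factor $a$ into the arguments. Since $a\in\mathcal{A}^+$, it has a (self-adjoint) positive square root $a^{1/2}\in \mathcal{A}$ satisfying $(a^{1/2})^2=a$, available by the usual continuous functional calculus in the unital $C^*$-algebra $\mathcal{A}$. With this at hand, for any $x,y\in\mathcal{A}$ I can rewrite the three relevant quantities as
$$x^*ay=(a^{1/2}x)^*(a^{1/2}y),\qquad x^*ax=(a^{1/2}x)^*(a^{1/2}x),\qquad y^*ay=(a^{1/2}y)^*(a^{1/2}y).$$

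Next, I would verify that $f$ is a non-zero positive linear functional on $\mathcal{A}$: positivity is part of the definition of $\mathcal{S}_a(\mathcal{A})$, and $f$ is non-zero because $f(a)=1$. Therefore Lemma \ref{gcauchy} applies to $f$ with the pair $(a^{1/2}x,\,a^{1/2}y)$ substituted for $(x,y)$, which yields
$$\bigl|f\bigl((a^{1/2}x)^*(a^{1/2}y)\bigr)\bigr|^{2}\le f\bigl((a^{1/2}x)^*(a^{1/2}x)\bigr)\,f\bigl((a^{1/2}y)^*(a^{1/2}y)\bigr).$$
Rewriting each side through the three identities above gives exactly $|f(x^*ay)|^2\le f(x^*ax)f(y^*ay)$, as required.

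There is no real obstacle: the entire content is the factorization $a=(a^{1/2})^*a^{1/2}$ together with a direct invocation of the Cauchy--Schwarz inequality for positive linear functionals. The only conceptual point worth flagging in the write-up is that the positive square root exists in the abstract $C^*$-algebra $\mathcal{A}$ (not only in $\mathcal{B}(H)$), which is a standard consequence of the functional calculus for positive elements.
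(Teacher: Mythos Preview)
Your proof is correct and is exactly the intended argument: the paper presents this lemma simply ``as a consequence of Lemma~\ref{gcauchy}'' without further detail, and your square-root factorization $a=(a^{1/2})^*a^{1/2}$ together with the substitution $(x,y)\mapsto(a^{1/2}x,a^{1/2}y)$ is precisely how one makes that consequence explicit.
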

  \begin{lemma} \cite{aujla2011refinements}
     Let $ \mathcal{A} $ be a unital $\mathcal{C}^*$
-algebra and $g\in \mathcal{S(A)}$. If $x$ is a self-adjoint element of $\mathcal{A}$ such that the spectrum of $x$ is contained in $[0,\infty)$, then for every continuous, convex function $\phi$ on $[0,\infty)$, the inequality $\phi\big(g(x)\big) \le g\big(\phi(x)\big)$ holds. \label{pp01}
 \end{lemma}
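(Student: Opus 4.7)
The plan is to combine the supporting-line (subgradient) characterization of convex functions with the continuous functional calculus and the positivity of the state $g$. Set $t_0 := g(x)$; since $g$ is a state and $x \ge 0$, we have $t_0 \in [0,\|x\|]$, in particular $t_0 \in [0,\infty)$, which is the domain of $\phi$.

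First I would invoke the fact that a continuous convex function $\phi$ on $[0,\infty)$ admits a supporting line at every point of its domain: there exists $\lambda \in \mathbb{R}$ (a subgradient of $\phi$ at $t_0$) such that
\[
\phi(t) \ge \phi(t_0) + \lambda\,(t - t_0) \qquad \text{for all } t \in [0,\infty).
\]
For $t_0 > 0$ this follows directly from standard convex analysis; the endpoint case $t_0 = 0$ is where one must be slightly careful, and is the main technical obstacle. If needed, it can be handled either by using the right derivative $\phi'_+(0)$ when finite, or by truncating $\phi$ on $[\varepsilon,\infty)$, applying the bound there, and letting $\varepsilon \to 0^+$ using continuity of $\phi$.

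Next, because $x$ is self-adjoint with $\sigma(x) \subseteq [0,\infty)$, the continuous functional calculus promotes the pointwise scalar inequality above to the operator inequality
\[
\phi(x) \ge \phi(t_0)\,\mathbf{1} + \lambda\,(x - t_0\,\mathbf{1})
\]
in $\mathcal{A}$, with both sides self-adjoint.

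Finally, applying the state $g$ to both sides and using linearity together with $g(\mathbf{1}) = 1$ and the fact that $g$ preserves the order (since it is positive), we get
\[
g\bigl(\phi(x)\bigr) \ge \phi(t_0) + \lambda\bigl(g(x) - t_0\bigr) = \phi(t_0) = \phi\bigl(g(x)\bigr),
\]
which is the claimed inequality. The only delicate point in the whole argument is the existence of the supporting line at $t_0 = 0$, and this can always be arranged as indicated above.
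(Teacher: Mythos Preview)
The paper does not supply a proof of this lemma: it is quoted verbatim from \cite{aujla2011refinements} and used as a black box, so there is no ``paper's own proof'' to compare against. Your argument is the standard supporting-line proof of Jensen's inequality for states and is correct. The one place worth tightening is the endpoint case $t_0 = g(x) = 0$ when $\phi'_+(0) = -\infty$: your suggestion of ``truncating $\phi$ on $[\varepsilon,\infty)$'' is a bit imprecise, and a cleaner fix is to perturb the element instead --- apply the already-proved inequality to $x_\varepsilon := x + \varepsilon\,\mathbf{1}$ (for which $g(x_\varepsilon) = \varepsilon > 0$), obtaining $\phi(\varepsilon) \le g\bigl(\phi(x_\varepsilon)\bigr)$, and then let $\varepsilon \to 0^+$ using the norm-continuity of the functional calculus on the compact spectrum together with the continuity of $\phi$.
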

 \begin{cor} \label{pje}Let $\mathcal{A}$ be a unital  $C^*$-algebra.
     If  $x \in \mathcal{A}^+$, $g \in\mathcal{S}\mathcal{(A)}$ and $0<r\le 1$, then
     \[\big(g(x)\big)^r\ge g\big((x)^r\big).\]
 \end{cor}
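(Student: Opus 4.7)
The strategy is to bootstrap Lemma \ref{pp01} by choosing a convex power function rather than the concave function $t \mapsto t^r$ directly. Specifically, since $0 < r \le 1$, the exponent $s := 1/r$ satisfies $s \ge 1$, so the map $\phi(t) = t^{s}$ is continuous and convex on $[0,\infty)$. I would apply Lemma \ref{pp01} to this $\phi$, but taking as its argument not $x$ itself but rather the positive element $x^{r}$.

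First I would verify the hypotheses of Lemma \ref{pp01} for this choice. Since $x \in \mathcal{A}^{+}$, the element $x$ is self-adjoint with spectrum in $[0,\infty)$, and continuous functional calculus gives $x^{r} \in \mathcal{A}^{+}$, so $x^{r}$ is self-adjoint with spectrum contained in $[0,\infty)$. Because $g$ is a positive linear functional, both $g(x)$ and $g(x^{r})$ are non-negative real numbers, which will be needed to manipulate the inequality by taking roots.

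Next, applying Lemma \ref{pp01} to the convex function $\phi(t) = t^{1/r}$ and the positive element $x^{r}$ yields
\[
\bigl(g(x^{r})\bigr)^{1/r} \;=\; \phi\bigl(g(x^{r})\bigr) \;\le\; g\bigl(\phi(x^{r})\bigr) \;=\; g(x),
\]
where the last equality uses $(x^{r})^{1/r} = x$ from the functional calculus. Raising both sides to the power $r > 0$ preserves the inequality since both sides are non-negative, and gives exactly
\[
g(x^{r}) \;\le\; \bigl(g(x)\bigr)^{r},
\]
which is the desired conclusion.

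There is essentially no obstacle here; the only subtlety is recognizing that the concave power $t^{r}$ is handled by applying Lemma \ref{pp01} to the conjugate convex power $t^{1/r}$ evaluated at $x^{r}$ instead of $x$. The alternative of applying the lemma to $-t^{r}$ (which is convex on $[0,\infty)$) would work equally well and would shorten the argument by one line, but the bootstrapping form above has the advantage of using Lemma \ref{pp01} in its stated positive form without introducing sign-flipped convex functions.
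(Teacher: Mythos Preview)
Your proof is correct and follows essentially the same approach as the paper: both apply Lemma \ref{pp01} with the convex power $\phi(t)=t^{1/r}$ to the positive element $x^{r}$, use $(x^{r})^{1/r}=x$, and then raise to the power $r$. Your write-up is in fact slightly cleaner than the paper's, which first states the general case $s\ge 1$ before specializing.
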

 \begin{proof}
     Taking $\phi(t)=t^s$, $t\ge0$ and $s\ge 1$ in Lemma \ref{pp01}, we   get $\big(g(x)\big)^s\le g\big((x)^s\big)$.  Since $x \in \mathcal{A}^+$ so, $\big(g(x)\big)^r=\bigg(g\big((x^r)^{\frac{1}{r}}\big)\bigg)^r$. Since $\frac{1}{r}\ge 1$, we get $g\big((x^r)^\frac{1}{r}\big)\ge \big(g(x^r)\big)^{\frac{1}{r}}$. Hence the result follows.
 \end{proof}
If $\phi$ is a continuous, convex function on $[0, \infty) $ with $\phi(0)=0$, then using Lemma \ref{pp01} we get the following useful results.

\begin{lemma}\label{lj}
    Let $x $ be an a-self-adjoint element  in a unital $C^*$-algebra $\mathcal{A}$ such that the spectrum of $ax$ is contained in $[0,\infty)$ and $a\ge \textbf{1} $. If $\phi(t)$  is a continuous, convex function on $[0, \infty) $ with $\phi(0)=0$, then for all $f\in \mathcal{S}_a\mathcal{(A)}$
   \[ f\big(\phi(ax)\big)\ge \phi\big (f(ax)\big)   .\]
\end{lemma}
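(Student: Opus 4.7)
The plan is to reduce the statement to an application of Lemma \ref{pp01} via the bijection between $\mathcal{S}_a(\mathcal{A})$ and a subset of $\mathcal{S}(\mathcal{A})$, and then absorb the extra factor $1/g(a) \leq 1$ using the subhomogeneity of convex functions vanishing at $0$.

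First, I would note that since $x$ is $a$-self-adjoint, $ax = x^*a = (ax)^*$ is self-adjoint, and the hypothesis on its spectrum gives $ax \geq 0$. Next, given $f \in \mathcal{S}_a(\mathcal{A})$, I would use the representation recalled in the introduction to write $f = g/g(a)$ for some $g \in \mathcal{S}(\mathcal{A})$ with $g(a) \neq 0$. Applying Lemma \ref{pp01} to the state $g$ and the positive self-adjoint element $ax$ then yields
\[
g\bigl(\phi(ax)\bigr) \;\geq\; \phi\bigl(g(ax)\bigr).
\]

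The remaining task is to pass from $g$ to $f$. Setting $\lambda = 1/g(a)$, the hypothesis $a \geq \mathbf{1}$ together with positivity of $g$ gives $g(a) \geq g(\mathbf{1}) = 1$, so $\lambda \in (0,1]$. Since $\phi$ is convex on $[0,\infty)$ with $\phi(0)=0$, for any $t \geq 0$ I have
\[
\phi(\lambda t) \;=\; \phi\bigl(\lambda t + (1-\lambda)\cdot 0\bigr) \;\leq\; \lambda\,\phi(t) + (1-\lambda)\,\phi(0) \;=\; \lambda\,\phi(t).
\]
Applying this with $t = g(ax) \geq 0$ and combining with the inequality obtained from Lemma \ref{pp01}, I get
\[
\phi\bigl(f(ax)\bigr) \;=\; \phi\bigl(\lambda\, g(ax)\bigr) \;\leq\; \lambda\,\phi\bigl(g(ax)\bigr) \;\leq\; \lambda\, g\bigl(\phi(ax)\bigr) \;=\; f\bigl(\phi(ax)\bigr),
\]
which is the desired inequality.

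There is no real obstacle here; the only subtle point is recognizing that the hypothesis $a \geq \mathbf{1}$ is precisely what is needed to ensure $1/g(a) \leq 1$, which in turn lets the convexity/$\phi(0)=0$ argument absorb the rescaling factor. Without $a \geq \mathbf{1}$ one might have $g(a) < 1$, and then the direction of the subhomogeneity inequality would reverse.
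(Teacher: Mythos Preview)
Your proof is correct and follows essentially the same approach as the paper: write $f = g/g(a)$ with $g \in \mathcal{S}(\mathcal{A})$, apply Lemma~\ref{pp01} to the state $g$ and the self-adjoint element $ax$, and then use $a \ge \mathbf{1}$ together with the subhomogeneity $\phi(\lambda t) \le \lambda\phi(t)$ (from convexity and $\phi(0)=0$) to pass from $g$ to $f$. Your write-up is in fact slightly more explicit than the paper's in justifying why $ax$ is self-adjoint and why the subhomogeneity inequality holds.
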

\begin{proof}
Let $f\in \mathcal{S}_a\mathcal{(A)}$. Then there exists $g\in \mathcal{S(A)}$ such that $f=\frac{g}{g(a)}$, where $g(a)\neq 0$. Since $a\ge \textbf{1}$, it implies $\frac{1}{g(a)}\le 1$. Again $\phi(0)=0$, it gives $\phi(\lambda t )\le \lambda \phi(t) $ for $0\le \lambda \le 1 $. Now, using Lemma \ref{pp01}, we get 
\[ f\big(\phi(ax)\big)=\frac{g\big(\phi(ax)\big)}{g(a)}\ge \frac{\phi\big(g(ax)\big)}{g(a)}\ge \phi\Bigg(\frac{g(ax)}{g(a)}\Bigg)=\phi\big(f(ax)\big).\]
\end{proof}
\begin{cor}\label{m02}
Let $\mathcal{A}$ be a unital  $\mathcal{C}^*$-algebra. If  $x$ be an $a$-self adjoint element in  $\mathcal{A}$, $f \in\mathcal{S}_a\mathcal{(A)}$ and $r\ge 1$, then
\[
\big(f(ax)\big)^{r}\le f\big((ax)^{r}\big).
\]
\end{cor}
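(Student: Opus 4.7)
The plan is to deduce this as a one-line consequence of Lemma~\ref{lj} by choosing the specific convex function $\phi(t) = t^{r}$ with $r \ge 1$. The role played by $\phi$ is parallel to its role in the proof of Corollary~\ref{pje}, except that here the exponent already lies in the ``right'' regime $r \ge 1$, so we can feed $\phi$ directly into Lemma~\ref{lj} without first having to invert the exponent as was done in Corollary~\ref{pje}.

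First I would verify that $\phi(t) = t^{r}$ satisfies the hypotheses of Lemma~\ref{lj}: it is continuous on $[0,\infty)$, convex there (since $\phi''(t) = r(r-1)t^{r-2} \ge 0$ for $r \ge 1$), and satisfies $\phi(0) = 0$. All three checks are immediate. Applying Lemma~\ref{lj} to this $\phi$ and to the given $f \in \mathcal{S}_{a}(\mathcal{A})$ then yields
\[
\big(f(ax)\big)^{r} \;=\; \phi\big(f(ax)\big) \;\le\; f\big(\phi(ax)\big) \;=\; f\big((ax)^{r}\big),
\]
which is precisely the claimed inequality.

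I do not anticipate any real obstacle: the entire content has been packaged in Lemma~\ref{lj}, and the only thing to notice is that $t \mapsto t^{r}$ is a legitimate choice of test function. The only delicate point worth flagging is that Lemma~\ref{lj} imposes the side conditions that the spectrum of $ax$ lie in $[0,\infty)$ and that $a \ge \mathbf{1}$; these are inherited as standing hypotheses from the lemma being invoked. The resulting statement should then be viewed as the natural companion of Corollary~\ref{pje}, supplying the reverse inequality at the $a$-state level for the complementary range of exponents $r \ge 1$.
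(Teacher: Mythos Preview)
Your proposal is correct and matches the paper's proof exactly: the paper simply writes ``Taking $\phi(t)=t^r$, $t\ge0$ and $r\ge 1$ in Lemma~\ref{lj}, we get the above inequality.'' Your flagging of the implicit side conditions (spectrum of $ax$ in $[0,\infty)$ and $a\ge\mathbf{1}$) is a valid observation, as these hypotheses from Lemma~\ref{lj} are indeed silently carried over rather than restated in the corollary.
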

\begin{proof} 
    Taking $\phi(t)=t^r$, $t\ge0$ and $r\ge 1$ in Lemma \ref{lj}, we  get the above inequality. 
\end{proof}
\begin{lemma}\label{lsplemma02}  Let $x ,y\in \mathcal{A}_a$ and $f\in\mathcal{S}_a\mathcal{(A)}$. Then
     \[ f(y^*x^*axy) \le \|x\|_a^2f(y^*ay).\]
    \end{lemma}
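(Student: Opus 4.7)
The plan is to manufacture, from the given $f$ and $y$, a new functional in $\mathcal{S}_a(\mathcal{A})$ for which the defining bound $\psi(x^{*}ax) \le \|x\|_a^2$ specialises to the stated inequality. Concretely, I would introduce $\phi \colon \mathcal{A} \to \mathbb{C}$ by $\phi(z) := f(y^{*} z y)$. Linearity is immediate, and positivity follows from $\phi(z^{*}z) = f((zy)^{*}(zy)) \ge 0$, so $\phi$ is a positive linear functional on $\mathcal{A}$ (and hence bounded).

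In the main case $\phi(a) = f(y^{*} a y) > 0$, I would normalise by $\psi := \phi/\phi(a)$. Then $\psi$ is positive with $\psi(a) = 1$, so $\psi \in \mathcal{S}_a(\mathcal{A})$. The definition $\|x\|_a^2 = \sup_{h \in \mathcal{S}_a(\mathcal{A})} h(x^{*} a x)$ then gives $\psi(x^{*} a x) \le \|x\|_a^2$, which unwinds exactly to $f(y^{*} x^{*} a x y) \le \|x\|_a^2 \, f(y^{*} a y)$.

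The remaining case $f(y^{*} a y) = 0$ will force the left-hand side to vanish as well. For this I would apply the Cauchy--Schwarz inequality of Lemma \ref{gcauchy} to $f$ with the element $b := a^{1/2} y$ (defining the square root via continuous functional calculus on the positive element $a$): from $f(b^{*} b) = 0$ one gets $f(c^{*} b) = 0$ for every $c \in \mathcal{A}$, and so $f(y^{*} a^{1/2} c) = 0$ for every $c$. Now the hypothesis $x \in \mathcal{A}_a$ supplies an $a$-adjoint $x^{\#_a}$ satisfying $x^{*} a = a x^{\#_a}$, hence
\[
y^{*} x^{*} a x y \;=\; y^{*} a\, x^{\#_a} x y \;=\; (y^{*} a^{1/2})(a^{1/2} x^{\#_a} x y),
\]
and taking $c = a^{1/2} x^{\#_a} x y$ yields $f(y^{*} x^{*} a x y) = 0$, matching the vanishing right-hand side.

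I expect the rescaling step to be essentially automatic, and the only real obstacle to be this degenerate case $f(y^{*} a y) = 0$; it is precisely that case that dictates the hypothesis $x \in \mathcal{A}_a$ rather than merely $x \in \mathcal{A}^{a}$, since the argument relies on the existence of the $a$-adjoint $x^{\#_a}$ to convert $x^{*} a$ into $a x^{\#_a}$ and thereby expose the factor $y^{*} a^{1/2}$ on which the Cauchy--Schwarz vanishing applies.
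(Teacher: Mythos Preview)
Your argument is correct. The rescaling $\psi := \phi/\phi(a)$ with $\phi(z)=f(y^{*}zy)$ is exactly the natural device here: positivity of $\phi$ is immediate, boundedness follows automatically since $\mathcal{A}$ is unital, and then $\psi\in\mathcal{S}_a(\mathcal{A})$ feeds directly into the definition of $\|x\|_a$. Your handling of the degenerate case $f(y^{*}ay)=0$ via Cauchy--Schwarz and the identity $x^{*}a=ax^{\#_a}$ is clean and correct; as you observe, this is precisely where the hypothesis $x\in\mathcal{A}_a$ (rather than merely $x\in\mathcal{A}^a$) is used.

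The paper does not actually supply its own proof of this lemma: it simply refers the reader to Proposition~3.3 of \cite{bourhim2021numerical}, where the submultiplicativity $\|xy\|_a\le\|x\|_a\|y\|_a$ is established. Your pointwise inequality is in fact the key step behind that submultiplicativity (taking the supremum over $f$ recovers it), and the standard proof in that reference proceeds by exactly the rescaling you describe. So your approach is not just correct but is the expected one; you have effectively reconstructed the cited argument.
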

    \begin{proof}
        See the Proposition $3.3$ of \cite{bourhim2021numerical}.
    \end{proof}
Next result follows from \cite[Lemma 2.12]{bhanja2020generalized}.
\begin{lemma} \label{sbu}  Let $\big(H,\langle\cdot\rangle\big)$  be a semi inner product space. If $x,y,e \in \mathcal{H} $  with $\langle e,e\rangle=1$, then 
    \[
  \big |\langle x,e\rangle\langle e,y \rangle\big| \le \frac{1}{2}\Big(\sqrt{\langle x,x \rangle}\sqrt{\langle y,y \rangle}+\big|\langle x,y\rangle\big|\Big).\]
\end{lemma}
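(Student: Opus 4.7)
The plan is to establish the Buzano-type inequality by first producing a useful identity that moves the target quantity $\langle x,e\rangle\langle e,y\rangle$ into a Cauchy--Schwarz--ready form, and then resolving the remaining scalar inequality by a squaring argument combined with AM--GM.

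First I would expand, using sesquilinearity of the semi-inner product together with $\langle e,e\rangle = 1$, the quantity
\[
\langle x - \langle x,e\rangle e,\ y - \langle y,e\rangle e\rangle.
\]
A short computation collapses this to $\langle x,y\rangle - \langle x,e\rangle\langle e,y\rangle$. An analogous calculation gives $\|x - \langle x,e\rangle e\|^2 = \|x\|^2 - |\langle x,e\rangle|^2$ and $\|y - \langle y,e\rangle e\|^2 = \|y\|^2 - |\langle y,e\rangle|^2$, where $\|\cdot\|$ denotes the semi-norm induced by the semi-inner product.

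Next I would apply the Cauchy--Schwarz inequality (which is valid for the semi-inner product, as recorded in Lemma \ref{gcauchy}) to the two vectors above, and combine with the triangle inequality to obtain
\[
|\langle x,e\rangle\langle e,y\rangle| \le |\langle x,y\rangle| + \sqrt{(\|x\|^2 - |\langle x,e\rangle|^2)(\|y\|^2 - |\langle y,e\rangle|^2)}.
\]
Writing $\alpha = |\langle x,e\rangle|$ and $\beta = |\langle y,e\rangle|$, the claim reduces to the scalar statement $2\alpha\beta \le \|x\|\|y\| + |\langle x,y\rangle|$.

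To finish, I would split into cases. If $\alpha\beta \le |\langle x,y\rangle|$, then the conclusion follows immediately from the trivial Cauchy--Schwarz bound $|\langle x,y\rangle| \le \|x\|\|y\|$. Otherwise $\alpha\beta - |\langle x,y\rangle| > 0$, so I can square the displayed inequality above; after expansion this simplifies to
\[
\alpha^2\|y\|^2 + \beta^2\|x\|^2 \le \|x\|^2\|y\|^2 - |\langle x,y\rangle|^2 + 2\alpha\beta\,|\langle x,y\rangle|.
\]
The AM--GM inequality $\alpha^2\|y\|^2 + \beta^2\|x\|^2 \ge 2\alpha\beta\|x\|\|y\|$ then yields
\[
2\alpha\beta(\|x\|\|y\| - |\langle x,y\rangle|) \le (\|x\|\|y\| - |\langle x,y\rangle|)(\|x\|\|y\| + |\langle x,y\rangle|),
\]
and dividing by the (nonnegative) common factor $\|x\|\|y\| - |\langle x,y\rangle|$ (or handling the equality case separately via Cauchy--Schwarz) gives $2\alpha\beta \le \|x\|\|y\| + |\langle x,y\rangle|$, as required.

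The most delicate step is the algebraic manipulation after squaring, where one has to recognise the factorisation of $\|x\|^2\|y\|^2 - |\langle x,y\rangle|^2$ and bring in AM--GM to extract the factor $\|x\|\|y\| - |\langle x,y\rangle|$ on both sides. The possible vanishing of this factor (the Cauchy--Schwarz equality case) is a minor but essential technicality, and it is the only place where one has to be slightly careful since $\|\cdot\|$ is only a semi-norm, so parallelism of $x$ and $y$ need not force collinearity in the strict sense.
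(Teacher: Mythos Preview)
Your argument is correct. The paper itself does not supply a proof of this lemma; it simply quotes the result from \cite[Lemma~2.12]{bhanja2020generalized}. What you have written is essentially the classical proof of Buzano's inequality, carried out verbatim in the semi-inner product setting: project onto $e$, apply Cauchy--Schwarz to the residuals, and finish with the scalar estimate via squaring and AM--GM. All steps are sound, including the handling of the degenerate case $\|x\|\|y\|=|\langle x,y\rangle|$.

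One small remark: your appeal to Lemma~\ref{gcauchy} is slightly misplaced, since that lemma is the Cauchy--Schwarz inequality for positive linear functionals on a $\mathcal{C}^*$-algebra, not for an abstract semi-inner product space. Of course the Cauchy--Schwarz inequality $|\langle u,v\rangle|^2\le \langle u,u\rangle\langle v,v\rangle$ holds in any semi-inner product space by the standard argument, so this is only a matter of citation, not of substance.
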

Let $\mathcal{A}$ be complex unital $\mathcal{C}^*$-algebra. Now, take $\mathcal{A}^n= \{(x_1,x_2,\cdots,x_n):x_i\in \mathcal{A}\}$. Let $(x_{1},x_{2},\cdots,x_n),(y_1,y_2,\cdots,y_n)\in\mathcal{A}^n $ and $\lambda \in \mathbb{C}$. Define \begin{eqnarray*}
 (x_{1},x_{2},\cdots,x_n)+(y_1,y_2,\cdots,y_n)&=&(x_{1}+y_1,x_2+y_2,\cdots,x_n+y_n)\\
\lambda(x_{1},x_{2},\cdots,x_n)& =&(\lambda x_{1}, \lambda x_{2},\cdots, \lambda x_n).   
\end{eqnarray*} With these operations $\mathcal{A}^n$ is a complex vector space.
The following two lemmas are generalizations of Lemma \ref{sbu} in $C^*$-algebra setting.
\begin{lemma}
    Let $\mathcal{A}$ be a unital $\mathcal{C}^*$-algebra and $P=(p_1,p_2, \cdots ,p_n)$ be a probability distribution  with $p_i\ge0$ for all $i=1,2,\cdots, n$ and $\sum_{i=1}^{n} p_i=1$. If $X=(x_1,x_2,\cdots,x_n)$, $Y=(y_1,y_2,\cdots,y_n)$ $\in \mathcal{A}^n$, and $f\in \mathcal{S}_a\mathcal{(A)}$, then
    \begin{eqnarray*}
        &&\Big| \sum_{i=1}^{n}p_if(ax_i)\Big|   \Big| \sum_{i=1}^{n}p_if(y_i^*a)\Big|\\
        &\le& \frac{  \Big| \sum_{i=1}^{n}p_if(y_i^*ax_i)\Big|+\sqrt{  \sum_{i=1}^{n}p_if(x_i^*ax_i) }\sqrt{ \sum_{i=1}^{n}p_if(y_i^*ay_i)}}{2}.
    \end{eqnarray*}    \label{cbuzano}
\end{lemma}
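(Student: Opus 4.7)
The plan is to recognize the displayed inequality as the statement of Lemma \ref{sbu} applied to a suitably chosen semi-inner product structure on $\mathcal{A}^n$, and with a suitably chosen unit vector $e$. This way, essentially no computation beyond checking axioms is needed.

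First, I would define a form $\langle\cdot,\cdot\rangle : \mathcal{A}^n \times \mathcal{A}^n \to \mathbb{C}$ by
\[
\langle X, Y\rangle \;=\; \sum_{i=1}^{n} p_i\, f(y_i^* a x_i),
\]
where $X=(x_1,\dots,x_n)$ and $Y=(y_1,\dots,y_n)$. Linearity in the first slot is immediate from the linearity of $f$ and the left-linear action on $ax_i$. Conjugate symmetry follows from the fact that for a positive linear functional on a $\mathcal{C}^*$-algebra one has $\overline{f(z)}=f(z^*)$ together with $(y_i^*ax_i)^* = x_i^* a y_i$ (using $a^*=a$). Positivity, i.e.\ $\langle X,X\rangle \ge 0$, follows since $a\ge 0$ implies $x_i^* a x_i \ge 0$ and $f$ is positive. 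Thus $(\mathcal{A}^n, \langle\cdot,\cdot\rangle)$ is a semi-inner product space.

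Next, I would take the unit vector $e = (\mathbf{1},\mathbf{1},\dots,\mathbf{1})$. Then $\langle e, e\rangle = \sum_{i=1}^{n} p_i f(a) = f(a) = 1$ because $f\in\mathcal{S}_a(\mathcal{A})$ and $\sum p_i = 1$. Moreover,
\[
\langle X, e\rangle = \sum_{i=1}^{n} p_i f(a x_i), \quad \langle e, Y\rangle = \sum_{i=1}^{n} p_i f(y_i^* a), \quad \langle X, Y\rangle = \sum_{i=1}^{n} p_i f(y_i^* a x_i),
\]
and $\langle X,X\rangle, \langle Y,Y\rangle$ are the two quantities appearing under the square roots in the right-hand side of the desired inequality.

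Finally, I would invoke Lemma \ref{sbu} directly with this semi-inner product, this choice of $e$, and the vectors $X,Y$; the conclusion is exactly the claimed inequality. The main obstacle is essentially conceptual rather than computational: spotting that the given messy-looking inequality is nothing more than the Buzano-type bound transported to $\mathcal{A}^n$ via the natural $f$- and $a$-weighted form. Once the form and the choice of $e$ are identified, verifying the semi-inner product axioms is routine using the standard facts $\overline{f(z)}=f(z^*)$, $a^*=a$, and $a\ge 0 \Rightarrow x^*ax\ge 0$.
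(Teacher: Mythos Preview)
Your proposal is correct and follows essentially the same approach as the paper: define the semi-inner product $\langle X,Y\rangle=\sum_i p_i f(y_i^*ax_i)$ on $\mathcal{A}^n$, take $e=(\mathbf{1},\dots,\mathbf{1})$ so that $\langle e,e\rangle=1$, and apply Lemma~\ref{sbu}. Your write-up is slightly more explicit about verifying the semi-inner product axioms, but the argument is the same.
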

\begin{proof} Let $\mathcal{A}$ be a unital $\mathcal{C}^*-$algebra with unit $\textbf{1}$ and $X=(x_1,x_2,\cdots,x_n)$, $Y=(y_1,y_2,\cdots,y_n)$ , $I=(\textbf{1},\textbf{1},\cdots,\textbf{1})$ $\in \mathcal{A}^n$. Clearly, $X,Y,I\in \mathcal{A}^n$ and $\langle X,Y\rangle = \sum_{i=1}^{n}p_if(y_i^*ax_i)$ is a semi inner product on $\mathcal{A}^n $ with $\langle 
 I,I \rangle=1$.  Now, from Lemma  
 \ref{sbu} for the vectors $X,Y$ and $I$ we get the required inequality.
\end{proof}
As a consequence of Lemma \ref{cbuzano} we have the following result.
\begin{lemma}
     Let $x_i,y_i$ $\in \mathcal{A}_a$, $i=1,2,\cdots,n$. If  $ P=(p_1,p_2,\cdots,p_n )$ be a probability distribution  such that $p_i\ge0$ for all $ i=1,2,\cdots,n$ , $\sum_{i=1}^{n} p_i=1$ and $f\in \mathcal{S}_a\mathcal{(A)}$,  then
     \begin{eqnarray*}
          &&\Big| \sum_{i=1}^{n}p_if(ax_i)\Big|   \Big| \sum_{i=1}^{n}p_if(ay_i)\Big|\\
          &\le & \frac{1}{2}\sqrt{  \sum_{i=1}^{n}p_if(ax_i^{\#_a}x_i) }\sqrt{ \sum_{i=1}^{n}p_if(ay_iy_i^{\#_a})} + \frac{ 1}{2} \Big| \sum_{i=1}^{n}p_if(ay_ix_i)\Big|.
     \end{eqnarray*}
     \label{lal}
 \end{lemma}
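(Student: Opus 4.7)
The plan is to deduce Lemma \ref{lal} directly from Lemma \ref{cbuzano} by specializing the entries of the tuple $Y$ to the $a$-adjoints of the $y_i$'s and then rewriting every term using the defining relation of $a$-adjoints.

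More precisely, since each $y_i \in \mathcal{A}_a$, there exists $y_i^{\#_a} \in \mathcal{A}$ with $a y_i^{\#_a} = y_i^* a$. Taking adjoints and using that $a$ is self-adjoint (being positive), this gives the companion identity $(y_i^{\#_a})^* a = a y_i$. Similarly, since $x_i \in \mathcal{A}_a$, we have $x_i^* a = a x_i^{\#_a}$. I would apply Lemma \ref{cbuzano} with the tuples $X = (x_1,\ldots,x_n)$ and $Y' = (y_1^{\#_a},\ldots,y_n^{\#_a})$ in place of $Y$. The left-hand side of Lemma \ref{cbuzano} then becomes
\[
\Big| \sum_{i=1}^{n} p_i f(a x_i) \Big| \, \Big| \sum_{i=1}^{n} p_i f((y_i^{\#_a})^* a) \Big| = \Big| \sum_{i=1}^{n} p_i f(a x_i) \Big| \, \Big| \sum_{i=1}^{n} p_i f(a y_i) \Big|,
\]
which is exactly the left-hand side of Lemma \ref{lal}.

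For the right-hand side, the cross term transforms as
\[
\sum_{i=1}^{n} p_i f((y_i^{\#_a})^* a x_i) = \sum_{i=1}^{n} p_i f(a y_i x_i),
\]
while the two ``diagonal'' terms become
\[
\sum_{i=1}^{n} p_i f(x_i^* a x_i) = \sum_{i=1}^{n} p_i f(a x_i^{\#_a} x_i), \qquad \sum_{i=1}^{n} p_i f((y_i^{\#_a})^* a y_i^{\#_a}) = \sum_{i=1}^{n} p_i f(a y_i y_i^{\#_a}).
\]
Substituting these three identities into Lemma \ref{cbuzano} yields exactly the inequality claimed in Lemma \ref{lal}.

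There is no real obstacle here: the only point that requires care is a consistent use of the $a$-adjoint relations $a w^{\#_a} = w^* a$ and its adjoint form $(w^{\#_a})^* a = a w$, making sure to pick the substitution $y_i \rightsquigarrow y_i^{\#_a}$ (and not, say, $y_i \rightsquigarrow y_i$), so that the factor $f(y_i^* a)$ in Lemma \ref{cbuzano} converts into the desired $f(a y_i)$.
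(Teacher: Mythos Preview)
Your proposal is correct and follows essentially the same approach as the paper: apply Lemma \ref{cbuzano} with $Y$ replaced by $(y_1^{\#_a},\ldots,y_n^{\#_a})$ and then rewrite each term using the $a$-adjoint relations $a w^{\#_a}=w^*a$ and $(w^{\#_a})^*a=aw$. Your treatment of the cross term via $(y_i^{\#_a})^*a x_i = a y_i x_i$ is in fact slightly more direct than the paper's, which detours through a conjugation argument $f(z^*)=\overline{f(z)}$, but the overall strategy is identical.
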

 \begin{proof} Since $y_i\in \mathcal{A}_a$ so there is  $y_i^{\#_a}\in \mathcal{A} $ such that $ay_i^{\#_a}=y_i^*a$ for all $1\le i\le n$. Hence $Y^{\#_a}=(y_1^{\#_a},y_2^{\#_a},\cdots,y_n^{\#_a})\in \mathcal{A}^n$. Now, if we apply  Lemma \ref{cbuzano} for the elements $X=(x_1,x_2,\cdots,x_n)$ and $Y^{\#_a}=(y_1^{\#_a},y_2^{\#_a},\cdots,y_n^{\#_a})$, then we get the following inequality
\begin{eqnarray*}
        &&\Big| \sum_{i=1}^{n}p_if(ax_i)\Big|   \Big| \sum_{i=1}^{n}p_if\big((y_i^{\#_a})^*a\big)\Big|\\&\le & \frac{ 1}{2}\sqrt{  \sum_{i=1}^{n}p_if(x_i^*ax_i) }\sqrt{ \sum_{i=1}^{n}p_if\big((y_i^{\#_a})^*ay_i^{\#_a}\big)} +\frac{ 1}{2} \Big| \sum_{i=1}^{n}p_if\big((y_i^{\#_a})^*ax_i\big)\Big|.
        \end{eqnarray*}
       Again, we know that if  $f \in \mathcal{S}_a\mathcal{(A)}$ ,  then 
    $f(x^*)=\overline{f(x)}$. 
        Now, from the above inequality, we get
        \begin{eqnarray*}
          &&\Big| \sum_{i=1}^{n}p_if(ax_i)\Big|   \Big| \sum_{i=1}^{n}p_if(ay_i)\Big|\\
          &\le & \frac{1}{2}\sqrt{  \sum_{i=1}^{n}p_if(ax_i^{\#_a}x_i) }\sqrt{ \sum_{i=1}^{n}p_if(ay_iy_i^{\#_a})} 
          +  \frac{ 1}{2} \Big| \sum_{i=1}^{n}p_if(x_i^*ay_i^{\#_a})\Big|\\
          &\le & \frac{1}{2}\sqrt{  \sum_{i=1}^{n}p_if(ax_i^{\#_a}x_i) }\sqrt{ \sum_{i=1}^{n}p_if(ay_iy_i^{\#_a})}
          + \frac{1}{2}  \Big| \sum_{i=1}^{n}p_if(x_i^*y_i^*a)\Big|\\
          &\le & \frac{1}{2}\sqrt{  \sum_{i=1}^{n}p_if(ax_i^{\#_a}x_i) }\sqrt{ \sum_{i=1}^{n}p_if(ay_iy_i^{\#_a})}
          +  \frac{1}{2}  \Big| \sum_{i=1}^{n}p_if(ay_ix_i)\Big|.
          \end{eqnarray*}
         
 \end{proof}

Now we derive upper bounds of $a$-numerical radius of an element in $C^*$-algebra using Orlicz function.

A map $\phi:[0,\infty) \to [0,\infty) $ is said to be an Orlicz function if it is continuous, convex,  non-decreasing, $\phi(0)=0$  and $\phi(u)\to \infty$ as $u\to \infty$ (see \cite{lindenstrauss}). An Orlicz function is said to be non-degenerate if $\phi(u)>0$ when $u>0$ and it is said to be degenerate if $\phi(u)=0 $ for some $u>0$. Throughout this paper whenever the Orlicz function appears, we interpret it as non-degenerate.
 An Orlicz function can be expressed by  $ \phi(u)=\int_{0}^{u} p(t) dt$, where $p$ is a non decreasing function such that $p(0)=0$,  $p(t)>0$ for $t>0$, $\lim_{t\to \infty}p(t)=\infty$. When $\phi(u)$ is equivalent to the function $u$ then those restrictions on $p$ is excluded. Let $q$ be the right inverse of $p$  and it is defined as $q(s)=\sup\{t:p(t)\le s\}$, $s\ge
 0$. The Orlicz function $\psi$,  defined as  $ \psi(v)=\int_{0}^{v} q(s) ds$, is called the complementary Orlicz function of $\phi$. If $\phi(t)=\frac{t^P}{p}$, $t\ge 0$ and $p>1$ then the corresponding Orlicz function is $\psi(t)=\frac{t^q}{q}$, whwre $\frac{1}{p}+\frac{1}{q}=1$.  Recently, Manna and Majhi \cite{majhi2022improvements} derived several numerical radius inequalities using Orlicz functions.

\begin{lemma}  \label{young}\cite{lndenstrauss1996numerical}
    Let $\phi$, and $\psi $ be two complementary Orlicz functions. Then for $x,y\ge 0 $, $ xy\le \phi(x)+\psi(y).$
\end{lemma}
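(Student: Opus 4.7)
The plan is to prove Young's inequality for complementary Orlicz functions by the classical geometric area-comparison argument, exploiting the integral representations $\phi(x)=\int_0^x p(t)\,dt$ and $\psi(y)=\int_0^y q(s)\,ds$ together with the fact that $q$ is the right inverse of $p$. The idea is to interpret the two integrals as the areas of two planar regions that together cover the rectangle $[0,x]\times[0,y]$, whose area is $xy$.

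First, I would rewrite the two integrals as plane areas via Fubini's theorem, setting
\begin{align*}
R_1 &= \{(t,s): 0\le t\le x,\ 0\le s\le p(t)\},\\
R_2 &= \{(t,s): 0\le s\le y,\ 0\le t\le q(s)\},
\end{align*}
so that the Lebesgue areas of $R_1$ and $R_2$ equal $\phi(x)$ and $\psi(y)$ respectively. Geometrically, $R_1$ sits under the graph of $p$ on $[0,x]$ while $R_2$ sits to the left of that graph between heights $0$ and $y$.

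The crux of the argument is the set inclusion $[0,x]\times[0,y]\subseteq R_1\cup R_2$. For a point $(t,s)$ in the rectangle, either $s\le p(t)$, whence $(t,s)\in R_1$ trivially, or $s>p(t)$; in the latter case the definition $q(s)=\sup\{u: p(u)\le s\}$ combined with the monotonicity of $p$ forces $t\le q(s)$, so that $(t,s)\in R_2$. This step is the main obstacle of the proof: since $p$ is only assumed non-decreasing (and may well have jumps or flat pieces), one cannot invoke a strict inverse, and the supremum definition of $q$ must be handled carefully to treat the boundary case $s=p(t)$ and any intervals of constancy of $p$.

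Once the covering is secured, monotonicity of Lebesgue measure delivers
\[
xy=\mathrm{area}\bigl([0,x]\times[0,y]\bigr)\le \mathrm{area}(R_1)+\mathrm{area}(R_2)=\phi(x)+\psi(y),
\]
which is precisely the claimed inequality. As a bonus, the argument also pinpoints the equality case: $xy=\phi(x)+\psi(y)$ holds iff the rectangle meets the graph of $p$ exactly along its upper-right corner, i.e., iff $y=p(x)$ (equivalently $x\in[\,q(y-),q(y)\,]$).
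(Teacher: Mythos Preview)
Your argument is the standard, correct proof of Young's inequality for complementary Orlicz functions: interpret $\phi(x)$ and $\psi(y)$ as planar areas via Fubini, check that the rectangle $[0,x]\times[0,y]$ is covered by the two regions, and compare measures. The covering step you flag as the ``main obstacle'' is handled correctly: if $s>p(t)$ then $p(t)\le s$, so $t$ belongs to the set $\{u:p(u)\le s\}$ and hence $t\le q(s)$.

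However, there is nothing to compare against here. In the paper this lemma is not proved at all; it is simply quoted from the literature (the citation \cite{lndenstrauss1996numerical} attached to the lemma heading). The authors use it as a black box in Lemma~\ref{L01} and Lemma~\ref{tla}. So your proposal is a valid supplementary proof of a result the paper treats as known, not an alternative to an argument the paper gives.
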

 The following lemma is used to prove the Theorem \ref{th1}. 

\begin{lemma} \label{lemma1}
    Let $x\in \mathcal{A}_a$ and $\phi$ be a Orlicz function. If $a\ge \textbf{1} $,  $f\in \mathcal{S}_a\mathcal{(A)}$ and   $0\le \alpha \le 1$, then
    \[\phi\Big(\big |f(ax)\big|^2\Big )\le   \alpha f\big ( \phi(ax^{\#_a}x)\big) + (1-\alpha) f\big( \phi(axx^{\#_a})\big) .\] 
\end{lemma}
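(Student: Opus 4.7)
The plan is to obtain the bound through two applications of the operator Cauchy--Schwarz inequality (Lemma \ref{cauchy}) combined with Jensen's inequality for the Orlicz function $\phi$ (Lemma \ref{lj}), and then to average the two resulting estimates with weights $\alpha$ and $1-\alpha$.

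First I would apply Lemma \ref{cauchy} to the pair $(\mathbf{1},x)$. Using $f(a)=1$, this gives $|f(ax)|^{2}\le f(x^{*}ax)$, and since $x^{*}a=ax^{\#_a}$ this becomes $|f(ax)|^{2}\le f(ax^{\#_a}x)$. For a symmetric estimate I would observe that $a$ is self-adjoint (being positive), so $(ax)^{*}=x^{*}a=ax^{\#_a}$; positivity of $f$ then yields $f(ax)=\overline{f(ax^{\#_a})}$, hence $|f(ax)|^{2}=|f(ax^{\#_a})|^{2}$. Applying Lemma \ref{cauchy} to the pair $(\mathbf{1},x^{\#_a})$ and using the identity $(x^{\#_a})^{*}a=(ax^{\#_a})^{*}=(x^{*}a)^{*}=ax$, I obtain $|f(ax)|^{2}\le f\bigl((x^{\#_a})^{*}ax^{\#_a}\bigr)=f(axx^{\#_a})$.

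Next I would verify that Lemma \ref{lj} is applicable to each of the elements $x^{\#_a}x$ and $xx^{\#_a}$. A short calculation using $ax^{\#_a}=x^{*}a$ and $(x^{\#_a})^{*}a=ax$ shows that both elements are $a$-self-adjoint, and the factorizations
\[
ax^{\#_a}x=x^{*}ax=(a^{1/2}x)^{*}(a^{1/2}x),\qquad axx^{\#_a}=(x^{\#_a})^{*}ax^{\#_a}=(a^{1/2}x^{\#_a})^{*}(a^{1/2}x^{\#_a})
\]
exhibit them as positive elements of $\mathcal{A}$, so their spectra lie in $[0,\infty)$. Combining monotonicity of $\phi$ with Lemma \ref{lj} (which is where the hypothesis $a\ge\mathbf{1}$ is used) then gives
\[
\phi\bigl(|f(ax)|^{2}\bigr)\le\phi\bigl(f(ax^{\#_a}x)\bigr)\le f\bigl(\phi(ax^{\#_a}x)\bigr),
\]
and analogously $\phi\bigl(|f(ax)|^{2}\bigr)\le f\bigl(\phi(axx^{\#_a})\bigr)$.

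Finally, multiplying the first inequality by $\alpha$ and the second by $1-\alpha$ and adding yields the desired bound. I do not expect any real obstacle here; the only delicate part is the algebraic bookkeeping needed to shift the $a$-adjoint between the two sides and to confirm that both $ax^{\#_a}x$ and $axx^{\#_a}$ satisfy the positivity and $a$-self-adjointness hypotheses of Lemma \ref{lj}.
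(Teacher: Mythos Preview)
Your argument is correct and uses the same ingredients as the paper's proof: the Cauchy--Schwarz bound (Lemma~\ref{cauchy}) applied to both $x$ and $x^{\#_a}$, monotonicity of $\phi$, and Lemma~\ref{lj}. The only cosmetic difference is the order of operations: the paper first forms the convex combination $\alpha f(ax^{\#_a}x)+(1-\alpha)f(axx^{\#_a})$, applies $\phi$, and then splits via convexity before invoking Lemma~\ref{lj}, whereas you apply $\phi$ and Lemma~\ref{lj} to each estimate separately and average at the end---your route is marginally more economical since it never needs the explicit convexity splitting step.
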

 \begin{proof}
 Let $x\in \mathcal{A}_a$ and $f\in \mathcal{S}_a\mathcal{(A)}$. Then we have
     \begin{eqnarray*}
         \big|f(ax)\big|^2 &=& \alpha \big|f(ax)\big|^2+(1-\alpha)\big|f(ax)\big|^2\\
                  &=&  \alpha \big|f(ax)\big|^2+(1-\alpha)\big |\overline{f(ax)}\big|^2\\
                    &=&  \alpha \big|f(ax) \big|^2+(1-\alpha) \big|f(x^*a) \big|^2\\
                    &=& \alpha  \big |f(ax) \big|^2+(1-\alpha) \big|f(ax^{\#_a}) \big|^2\\
                    &\le& \alpha f(x^*ax) +(1-\alpha)f \big((x^{\#_a})^*ax^{\#_a} \big)\\
                      &=& \alpha f(ax^{\#_a}x)+(1-\alpha)f(axx^{\#_a}),
     \end{eqnarray*}
 where, the third equality comes from the fact that   $f(y^*)=\overline{f(y)}$ for all $y\in \mathcal{A}$ and the above inequality follows from Lemma \ref{cauchy}. 
     Now, using the  non-decreasing and convexity  property of $\phi$ and from Lemma \ref{lj}, we get
     \begin{eqnarray}
      \phi\Big( \big|f(ax) \big|^2\Big) &\le & \phi\Big( \alpha f(ax^{\#_a}x)+(1-\alpha)f(axx^{\#_a}) \Big)  \nonumber\\
      &\le & \alpha\hspace{0.1 cm} \phi  \big(f(ax^{\#_a}x) \big) + (1-\alpha) \phi \big(f(axx^{\#_a}) \big) \label{4.1}\\
      &\le & \alpha\hspace{0.1 cm}f \big ( \phi(ax^{\#_a}x) \big) + (1-\alpha) f \big( \phi(axx^{\#_a}) \big).  \nonumber
     \end{eqnarray}
 \end{proof}

\begin{theorem} \label{th1} Let $\mathcal{A} $ be a unital $\mathcal{C}^*$-algebra  and  $0\le \alpha \le 1$, then the following results hold:
\begin{enumerate}[\upshape (a)]
    \item   \label{ th1a} If $x\in \mathcal{A}_a $,  then 
       \[v_a^2(x) \le   \big\| \alpha x^{\#_a}x + (1-\alpha) xx^{\#_a} \big\|_a .\]
       \item \label{th1b} If $x\in \mathcal{A}$ and $\phi$ be an 
Orlicz function then 
         \[ \phi \big( v^2(x) \big)\le   \big\| \alpha \phi(x^{*}x) + (1-\alpha) \phi (xx^{*}) \big\| .
    \]
    \end{enumerate}
\end{theorem}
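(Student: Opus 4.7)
The plan is to handle both parts by a common two-step strategy: for an appropriate positive linear functional, split $|f(\cdot)|^2 = \alpha|f(\cdot)|^2 + (1-\alpha)|f(\cdot)|^2$, apply a Cauchy--Schwarz-type estimate (Lemma \ref{cauchy}) to each summand using that one can insert $x$ on one side and $x^{\#_a}$ (or $x^*$) on the other, and finish by taking the supremum. The Orlicz function in (b) enters only in a last step via Lemma \ref{pp01}.

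For (a), fix $f \in \mathcal{S}_a\mathcal{(A)}$ and split
\[
|f(ax)|^2 = \alpha|f(ax)|^2 + (1-\alpha)|f(ax)|^2.
\]
In the second summand I use $f(ax) = \overline{f(x^*a)} = \overline{f(ax^{\#_a})}$ (from $x^*a = ax^{\#_a}$ and $f(\cdot^*) = \overline{f(\cdot)}$). Lemma \ref{cauchy} applied to each summand then yields
\[
|f(ax)|^2 \le \alpha f(x^*ax) + (1-\alpha)f\bigl((x^{\#_a})^*ax^{\#_a}\bigr).
\]
Taking adjoints in $ax^{\#_a} = x^*a$ gives $(x^{\#_a})^*a = ax$, so $x^*ax = a(x^{\#_a}x)$ and $(x^{\#_a})^*ax^{\#_a} = a(xx^{\#_a})$ are positive self-adjoint elements of $\mathcal{A}$. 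Setting $z := \alpha x^{\#_a}x + (1-\alpha)xx^{\#_a}$, I conclude that $z$ is $a$-self-adjoint with $az \ge 0$ and $|f(ax)|^2 \le f(az)$. Since $z$ is $a$-self-adjoint, the second inequality in \eqref{in2} is an equality, so $v_a(z) = \|z\|_a$; taking the supremum over $f$ then gives $v_a^2(x) \le v_a(z) = \|z\|_a$.

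For (b), fix $g \in \mathcal{S(A)}$ and apply the same split with the classical Cauchy--Schwarz inequality for states (Lemma \ref{gcauchy} with $y = \mathbf{1}$) to obtain $|g(x)|^2 \le \alpha g(x^*x) + (1-\alpha)g(xx^*)$. Now apply the non-decreasing convex Orlicz function $\phi$, use convexity to distribute, and push $\phi$ inside the state by Lemma \ref{pp01} (applicable because $x^*x$ and $xx^*$ are positive):
\[
\phi\bigl(|g(x)|^2\bigr) \le \alpha g(\phi(x^*x)) + (1-\alpha) g(\phi(xx^*)) = g\bigl(\alpha\phi(x^*x) + (1-\alpha)\phi(xx^*)\bigr).
\]
Since $|g(y)| \le \|y\|$ for every state $g$, the right-hand side is at most $\|\alpha\phi(x^*x) + (1-\alpha)\phi(xx^*)\|$. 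Taking the supremum over $g$ and using continuity and monotonicity of $\phi$ to commute the supremum with $\phi$ gives $\phi(v^2(x)) \le \|\alpha\phi(x^*x) + (1-\alpha)\phi(xx^*)\|$.

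The only step requiring any care is the verification in (a) that $z$ is $a$-self-adjoint (and in fact $a$-positive); this is where the $a$-adjoint structure does the work, and it comes entirely from the defining relation $ax^{\#_a} = x^*a$ and its adjoint. Everything else is a direct application of the already-established lemmas.
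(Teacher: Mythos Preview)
Your proposal is correct and follows essentially the same route as the paper: the paper first records the splitting-plus-Cauchy--Schwarz step as Lemma~\ref{lemma1} (culminating in inequality~\eqref{4.1}), and then proves part~(\ref{ th1a}) by taking $\phi(t)=t$ there and part~(\ref{th1b}) by specializing to $a=\mathbf{1}$, exactly as you do. The only cosmetic difference is that the paper finishes each part by identifying the bound as $v_a(\cdot)$ (respectively $v(\cdot)$) of an $a$-self-adjoint (respectively self-adjoint) element before converting to the norm, whereas you invoke $|g(y)|\le\|y\|$ directly; both are equivalent here.
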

 \begin{proof}\begin{enumerate}[\upshape (a)]
     \item Considering $\phi(t)= t$, $t\ge 0$ in the inequality \eqref{4.1} one gets  
     \begin{eqnarray*}
    \big |f(ax)\big |^2 &\le & \alpha f(ax^{\#_a}x)+(1-\alpha)f(axx^{\#_a})\\
    \big |f(ax)\big |^2 &\le & f\big(a(\alpha x^{\#_a}x+(1-\alpha)xx^{\#_a})  \big).
     \end{eqnarray*}
     
     Now, taking supremum over $f\in 
  \mathcal{S}_a\mathcal{(A)}$ on both sides of the above inequality, we get 
  \[
  v_a^2(x)\le v_a\big(\alpha x^{\#_a}x+(1-\alpha)xx^{\#_a}\big)
  \]
  Again, we know that
  for any  $a$-self-adjoint element $y$,  $v_a(y)=\|y\|_a$ holds.
 Hence the result follows.
\item  Let $x\in \mathcal{A}$ and $f\in \mathcal{S(A)}$. Now,
     if we take $a=\textbf{1}$ in Lemma \ref{lemma1}, we get 
     \begin{eqnarray*}
           \phi( |f(x)|^2 )&\le &   \alpha\hspace{0.05 cm}f \big ( \phi(x^{*}x) \big) + (1-\alpha) f \big( \phi(xx^{*}) \big)\\
           &= &  f \big( \alpha\hspace{0.1 cm}  \phi(x^{*}x) + (1-\alpha) \phi(xx^{*}) \big).
     \end{eqnarray*}
     Taking supremum over $f\in 
  \mathcal{S}\mathcal{(A)}$ on both sides of the above inequality, we get
  \begin{eqnarray*}
          \phi( v^2(x))&\le&  v\big( \alpha \phi\big(x^{*}x\big) + (1-\alpha) \phi \big(xx^{*}\big)\big). 
    \end{eqnarray*}
    Since for a self-adjoint element $y\in \mathcal{A}$, $v(y)=\|y\|$, we get the desired inequality.
    \end{enumerate}
 \end{proof}
 Following corollary is an immediate consequence of Theorem \ref{th1}.
\begin{cor}\label{re01}
    Let $\mathcal{A}$ be a unital $\mathcal{C}^*$-algebra and $x\in \mathcal{A}$. If $0\le \alpha \le 1 $ and $r\ge 1$, then 
     \begin{eqnarray*}
    v^{2r}(x)\le  \big\|\alpha(x^*x)^{r}+(1-\alpha)(xx^*)^{r} \big\|.\hspace{0.5 cm} \end{eqnarray*}
\end{cor}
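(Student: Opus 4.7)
The plan is to apply Theorem \ref{th1}\eqref{th1b} to the specific Orlicz function $\phi(t) = t^r$ for $r \ge 1$ and read off the conclusion. The bulk of the work has already been packaged in Theorem \ref{th1}\eqref{th1b}, so the task reduces to verifying that this $\phi$ is admissible and then rewriting both sides of the inequality.

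First I would check that $\phi(t) = t^r$, $t \ge 0$, is a (non-degenerate) Orlicz function for every $r \ge 1$. Continuity and $\phi(0)=0$ are obvious; convexity holds because $\phi''(t) = r(r-1)t^{r-2} \ge 0$ on $[0,\infty)$; $\phi$ is non-decreasing on $[0,\infty)$; $\phi(u) \to \infty$ as $u \to \infty$; and $\phi(u)>0$ whenever $u>0$, so $\phi$ is non-degenerate in the sense required in the excerpt. Thus $\phi$ meets every condition listed before Lemma \ref{young}.

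Next, I would substitute $\phi(t)=t^r$ into the inequality of Theorem \ref{th1}\eqref{th1b}. On the left, $\phi(v^2(x)) = \bigl(v^2(x)\bigr)^r = v^{2r}(x)$. On the right, since $x^*x$ and $xx^*$ are positive elements of $\mathcal{A}$, the continuous functional calculus gives $\phi(x^*x) = (x^*x)^r$ and $\phi(xx^*) = (xx^*)^r$, so the bound becomes
\[
\bigl\| \alpha (x^*x)^r + (1-\alpha)(xx^*)^r \bigr\|.
\]
Combining these two identifications with the inequality supplied by Theorem \ref{th1}\eqref{th1b} yields exactly the desired estimate.

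There is no real obstacle here; the only thing one must be mildly careful about is the non-degeneracy hypothesis on $\phi$ (the paragraph preceding Lemma \ref{young} explicitly excludes degenerate Orlicz functions), but this is automatic for $\phi(t)=t^r$ with $r \ge 1$.
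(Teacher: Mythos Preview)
Your proof is correct and follows exactly the paper's approach: apply Theorem \ref{th1}\eqref{th1b} with the Orlicz function $\phi(t)=t^r$, $r\ge 1$, and read off the result. The paper's proof is in fact shorter, omitting the routine verification that $t\mapsto t^r$ is an admissible Orlicz function.
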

\begin{proof}
     Choose  $\phi(t)=t^r$, for $t\ge 0$ and $r\ge 1$ in 
 Theorem \ref{th1}(\ref{th1b}),
 we get the required result.
\end{proof}

\begin{remark} 
 Let $\mathcal{A} $ be a unital $\mathcal{C}^*$-algebra and  $0\le \alpha \le 1$, then we get the following known results from Theorem \ref{th1} and Corollary \ref{re01}.
    \begin{enumerate} [\upshape (i)]
        \item  If we take $\alpha=\frac{1}{2}$ in Theorem \ref{th1} (\ref{ th1a}), we get the right hand inequality of \cite[Corollary 3.5]{mabrouk2023extension}.
        \item  If we consider the $\mathcal{C}^*$-algebra $\mathcal{A}=\mathcal{B}(H)$, the Theorem \ref{th1} (\ref{ th1a}) becomes \cite[Theorem 1 ]{bhunia2021improvement}.
 \item  If  $\alpha=\frac{1}{2}$ and $\phi(t)=t$, $t\ge0$ in Theorem \ref{th1} (\ref{th1b}), we get \cite[Corollary 2.8]{zamani2019characterization}.
\item If we take $\mathcal{A}=\mathcal{B}(H)$, in the  Corollary \ref{re01} becomes  
  \cite[Theorem 2 ]{el2007numerical}.
\end{enumerate}
    
\end{remark}
Next result is a generalization of the inequality (\ref{in3}).
 
\begin{theorem} \label{ramm}
    Let $x\in \mathcal{A}_a$ and $\phi$ be an Orlicz function. Then 
    \begin{eqnarray*}
        \phi \big(v_a^2(x) \big) \le \frac{1}{2} \phi \big(v_a(x^2) \big)+\frac{1}{2}\phi\Bigg(\frac{\|xx^{\#_a}+x^{\#_a}x\|_a}{2}\Bigg).
    \end{eqnarray*}
\end{theorem}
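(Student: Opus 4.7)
The plan is to run a Buzano-style pointwise estimate on $|f(ax)|^{2}$ for a fixed $f\in\mathcal{S}_a(\mathcal{A})$, re-package the right-hand side as a convex combination whose weights are $\tfrac12,\tfrac12$, push the Orlicz function $\phi$ across by convexity and monotonicity, and finally take the supremum over $f$. The key input will be Lemma~\ref{lal} applied to the trivial probability distribution $n=1$, $p_1=1$ with $x_1=y_1=x$.

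First I would specialize Lemma~\ref{lal} to obtain
\[
 |f(ax)|^{2}\ \le\ \tfrac{1}{2}\sqrt{f(ax^{\#_a}x)}\,\sqrt{f(axx^{\#_a})}\ +\ \tfrac{1}{2}|f(ax^{2})|.
\]
Since the elements $ax^{\#_a}x=x^{*}ax$ and $axx^{\#_a}=(x^{\#_a})^{*}ax^{\#_a}$ are both positive (using $ax=(x^{\#_a})^{*}a$ together with $a\ge0$), the two square roots are real and nonnegative, so an AM--GM step converts the cross term into a linear one:
\[
 |f(ax)|^{2}\ \le\ \tfrac{1}{4}f\bigl(a(x^{\#_a}x+xx^{\#_a})\bigr)+\tfrac{1}{2}|f(ax^{2})|
 \ =\ \tfrac{1}{2}|f(ax^{2})|+\tfrac{1}{2}\cdot\tfrac{f\bigl(a(x^{\#_a}x+xx^{\#_a})\bigr)}{2}.
\]
The right-hand side is deliberately written as a convex combination with weights $\tfrac12,\tfrac12$, which is the decisive arrangement for what follows.

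Next I would apply $\phi$ to both sides. Because $\phi$ is non-decreasing and convex,
\[
 \phi\bigl(|f(ax)|^{2}\bigr)\ \le\ \tfrac{1}{2}\phi\bigl(|f(ax^{2})|\bigr)+\tfrac{1}{2}\phi\!\left(\frac{f\bigl(a(x^{\#_a}x+xx^{\#_a})\bigr)}{2}\right).
\]
Now I would take the supremum over $f\in\mathcal{S}_a(\mathcal{A})$; continuity and monotonicity of $\phi$ allow $\sup_f$ and $\phi$ to interchange on each term, yielding $\phi(v_a^{2}(x))$ on the left and $\tfrac12\phi(v_a(x^{2}))+\tfrac12\phi\!\bigl(v_a(x^{\#_a}x+xx^{\#_a})/2\bigr)$ on the right.

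The last step is to verify that $x^{\#_a}x+xx^{\#_a}$ is $a$-self-adjoint, so that the inequality $v_a(y)\le\|y\|_a$ becomes an equality for this particular $y$; a direct computation using $ax^{\#_a}=x^{*}a$ and $(x^{\#_a})^{*}a=ax$ shows $a(x^{\#_a}x+xx^{\#_a})=(x^{\#_a}x+xx^{\#_a})^{*}a$. Substituting $v_a(x^{\#_a}x+xx^{\#_a})=\|x^{\#_a}x+xx^{\#_a}\|_a$ then gives exactly the claimed bound. The only part that requires any care is step two, where the coefficients have to be arranged as $\tfrac12$ and $\tfrac12$ (rather than $\tfrac12$ and $\tfrac14$) before applying convexity, because the target inequality is $\tfrac12\phi(\cdot)+\tfrac12\phi(\cdot)$; the $\tfrac14$ inside the second $\phi$ absorbs the mismatch and is exactly the $1/2$ sitting inside the displayed conclusion of the theorem.
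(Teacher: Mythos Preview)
Your proof is correct and follows essentially the same route as the paper: specialize Lemma~\ref{lal} with $n=1$, combine with AM--GM, apply $\phi$ via convexity and monotonicity, and take the supremum over $f\in\mathcal{S}_a(\mathcal{A})$. The only cosmetic difference is that the paper applies $\phi$ first and then invokes AM--GM inside the second term (using monotonicity of $\phi$), whereas you perform AM--GM before applying $\phi$; both orderings yield the identical bound.
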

\begin{proof}  Let $x\in \mathcal{A}_a$ and $f\in \mathcal{S}_a\mathcal{(A)}$. Now, taking $n=1$ in Lemma \ref{lal}, we get 
\begin{eqnarray*}
     \big|f(ax) \big|^2 \le \frac{1}{2} \big|f(ax^2) \big| +\frac{1}{2}\sqrt{f(ax^{\#_a}x)f(axx^{\#_a})}.
\end{eqnarray*}
Again, using the non-decreasing and convexity property of $\phi$ we obtain 
\begin{eqnarray*}
    \phi \Big( \big|f(ax) \big|^2 \Big)&\le& \frac{1}{2} \phi\Big( \big|f(ax^2) \big| \Big )+\frac{1}{2}\phi \Big(\sqrt{f(ax^{\#_a}x)f(axx^{\#_a})} \Big)\\
    &\le& \frac{1}{2} \phi \big(v_a(x^2) \big )+\frac{1}{2}\phi\bigg(\frac{f(ax^{\#_a}x)+f(axx^{\#_a})}{2}\bigg)\\
    &=& \frac{1}{2} \phi \big(v_a(x^2)  \big)+\frac{1}{2}\phi\bigg(\frac{f \big(a(x^{\#_a}x+xx^{\#_a}) \big)}{2}\bigg)\\
    &\le& \frac{1}{2} \phi \big(v_a(x^2) \big)+\frac{1}{2}\phi\bigg(\frac{\|xx^{\#_a}+x^{\#_a}x\|_a}{2}\bigg).
\end{eqnarray*}
Since $\phi$ is continuous, non-decreasing and the above inequality holds for all $f\in 
  \mathcal{S}_a\mathcal{(A)}$, so we get the desired result.

\end{proof}
An immediate consequence of Theorem \ref{ramm} is the following corollary.

\begin{cor}\label{re02}
      Let $x\in \mathcal{A}_a$. If $r\ge 1$, then 
       \begin{eqnarray*}
        v_a^{2r}(x)\le \frac{1}{2} v_a^r(x^2)+\frac{1}{2^{r+1}}\big\|xx^{\#_a}+x^{\#_a}x\big\|_a^r.
    \end{eqnarray*}
\end{cor}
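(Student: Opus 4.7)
The plan is to apply Theorem \ref{ramm} with a specific choice of Orlicz function, namely $\phi(t) = t^r$ for $t \ge 0$, where $r \ge 1$. The first step is to verify that this $\phi$ is indeed a (non-degenerate) Orlicz function in the sense defined in the paper. This is straightforward: $\phi$ is continuous on $[0,\infty)$; it is convex because $r \ge 1$ makes $t^r$ have non-negative second derivative on $(0,\infty)$; it is non-decreasing because its derivative $r t^{r-1} \ge 0$; we have $\phi(0)=0$; and $\phi(u) = u^r \to \infty$ as $u \to \infty$. Also $\phi(u) > 0$ for $u > 0$, so $\phi$ is non-degenerate.

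Having validated the choice of $\phi$, I would substitute directly into the conclusion of Theorem \ref{ramm}:
\begin{equation*}
\bigl(v_a^2(x)\bigr)^r \;\le\; \frac{1}{2}\bigl(v_a(x^2)\bigr)^r + \frac{1}{2}\left(\frac{\|xx^{\#_a}+x^{\#_a}x\|_a}{2}\right)^r.
\end{equation*}
Simplifying, $\bigl(v_a^2(x)\bigr)^r = v_a^{2r}(x)$, $\bigl(v_a(x^2)\bigr)^r = v_a^r(x^2)$, and the last term collapses to $\frac{1}{2^{r+1}}\|xx^{\#_a}+x^{\#_a}x\|_a^r$, yielding exactly the claimed bound.

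There is essentially no obstacle here, since Theorem \ref{ramm} has already done the work of pulling $\phi$ through $v_a$ and the semi-norm $\|\cdot\|_a$. The only thing one must be careful about is to ensure $\phi(t)=t^r$ fits the definition of an Orlicz function used in the paper (in particular, that non-degeneracy and the growth condition are satisfied for all $r \ge 1$), which as noted above is immediate. Thus the corollary is a one-line specialization of Theorem \ref{ramm}.
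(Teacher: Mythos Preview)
Your proposal is correct and follows exactly the same approach as the paper: the paper's proof is simply the one-line specialization of Theorem \ref{ramm} with $\phi(t)=t^r$, $t\ge 0$, $r\ge 1$. Your additional verification that $t^r$ is a non-degenerate Orlicz function is a helpful detail but not something the paper spells out.
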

\begin{proof}
    If we take $\phi(t)=t^r$, $t\ge 0$ and $r\ge 1$ in Theorem \ref{ramm}, we get the required result.
\end{proof}

\begin{remark}
    Let $\mathcal{A} $ be a unital $\mathcal{C}^*$-algebra. Then we get the following known results:
    \begin{enumerate} [\upshape (i)]
      \item If we take $\phi(t)=t$, $t\ge 0$ in Theorem \ref{ramm}, then we get  \cite[Theorem 3.3]{mabrouk2023extension}.      
    \item   If we take $\mathcal{A}=\mathcal{B}(H)$, the Corollary \ref{re02} becomes  
 \cite[Theorem 2 ]{bhunia2021improvement}.
    \end{enumerate}
\end{remark}

 \begin{lemma}\label{lemma2}
  Let $x\in \mathcal{A}_a$ and $\phi$ be an Orlicz function. If $a\ge \textbf{1}$, $f\in \mathcal{S}_a\mathcal{(A)}$ and   $0\le \alpha \le 1$, then

  \begin{enumerate}[\upshape(a)]
      \item 
   $ \phi\Big(\big|f(ax)\big|^2\Big)\le   \frac{\alpha}{2}\phi\Big(\big|f(ax^2)\big|\Big)+\frac{\alpha}{4}f\big(\phi(axx^{\#_a})\big)+(1-\frac{3\alpha}{4})f\big(\phi(ax^{\#_a}x)\big).$ \label{rms01}
   \item  $ \phi\Big(\big|f(ax)\big|^2\Big)\le   \frac{\alpha}{2}\phi\Big(\big|f(ax^2)\big|\Big)+\frac{\alpha}{4}f\big(\phi(ax^{\#_a}x)\big)+(1-\frac{3\alpha}{4})f\big(\phi(axx^{\#_a})\big).$ \label{rms02}
\end{enumerate}
    
 \end{lemma}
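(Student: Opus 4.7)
The plan is to fuse two bounds for $|f(ax)|^2$---the Buzano-type estimate coming from Lemma \ref{lal} with $n=1$ and the Cauchy--Schwarz bound of Lemma \ref{cauchy}---via a convex split by $\alpha$ and $1-\alpha$, and then to distribute the Orlicz function $\phi$ using its convexity and monotonicity together with Lemma \ref{lj}. The two symmetric statements \eqref{rms01} and \eqref{rms02} are forced by the two distinct ways of applying Cauchy--Schwarz to $|f(ax)|^2$.

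Concretely, I would first specialize Lemma \ref{lal} to $n=1$ with $x_1=y_1=x$ and $p_1=1$, obtaining
\[
|f(ax)|^2 \le \tfrac{1}{2}|f(ax^2)| + \tfrac{1}{2}\sqrt{f(ax^{\#_a}x)\,f(axx^{\#_a})},
\]
and then apply AM--GM to the square root to replace it with $\tfrac{1}{2}\bigl(f(ax^{\#_a}x)+f(axx^{\#_a})\bigr)$. Multiplying the resulting inequality by $\alpha$ accounts for a fraction $\alpha|f(ax)|^2$ of the left-hand side. For the remaining $(1-\alpha)|f(ax)|^2$ I would invoke Lemma \ref{cauchy}: writing $ax = \mathbf{1}^{*}ax$ gives $|f(ax)|^2 \le f(ax^{\#_a}x)$, while using $f(ax)=\overline{f(ax^{\#_a})}$ (which follows from $ax^{\#_a}=x^*a$ and $f(z^*)=\overline{f(z)}$) together with the pair $(\mathbf{1},x^{\#_a})$ yields $|f(ax)|^2 \le f(axx^{\#_a})$. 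Choosing the first bound produces (a) and the second produces (b). In both cases the three coefficients $\tfrac{\alpha}{2},\,\tfrac{\alpha}{4},\,1-\tfrac{3\alpha}{4}$ sum to $1$, so the bound on $|f(ax)|^2$ is an honest convex combination of $|f(ax^2)|$, $f(ax^{\#_a}x)$, and $f(axx^{\#_a})$.

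Applying the non-decreasing convex function $\phi$ and distributing it over the three terms via Jensen's inequality preserves the structure. The final move is to pull $\phi$ inside $f$ on the two ``self-adjoint'' summands using Lemma \ref{lj}, and this is the step I expect to require the most care: I must verify that $x^{\#_a}x$ and $xx^{\#_a}$ are $a$-self-adjoint with the spectra of $ax^{\#_a}x$ and $axx^{\#_a}$ lying in $[0,\infty)$. This follows quickly from $ax^{\#_a}=x^*a$, which gives $ax^{\#_a}x = x^*ax \ge 0$, and, setting $y=x^{\#_a}$ so that $y^*a = (ay)^* = (x^*a)^* = ax$, also $axx^{\#_a} = y^*ay \ge 0$. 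With these positivity facts established (and using $a\ge \mathbf{1}$, which is built into the hypothesis of Lemma \ref{lj}), the convex-combination bounds transfer directly into the two inequalities (a) and (b).
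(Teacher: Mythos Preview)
Your proposal is correct and follows essentially the same route as the paper: a convex $\alpha/(1-\alpha)$ split, the Buzano-type bound from Lemma~\ref{lal} with $n=1$ on one piece, Cauchy--Schwarz (Lemma~\ref{cauchy}) on the other, AM--GM on the geometric mean, convexity/monotonicity of $\phi$, and finally Lemma~\ref{lj} to push $\phi$ inside $f$. The only cosmetic difference is ordering: the paper applies $\phi$ immediately and splits $\phi(|f(ax)|^2)=\alpha\phi(\cdots)+(1-\alpha)\phi(\cdots)$ before bounding each piece, whereas you first obtain the scalar convex-combination bound on $|f(ax)|^2$ and then apply $\phi$ once to the three-term sum; both reach the same inequality (your version being marginally cleaner). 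Your explicit verification that $ax^{\#_a}x=x^*ax\ge 0$ and $axx^{\#_a}=(x^{\#_a})^*ax^{\#_a}\ge 0$ is exactly the positivity input Lemma~\ref{lj} needs and is left implicit in the paper.
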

\begin{proof}  \hspace{2 cm}
\begin{enumerate}[\upshape(a)]
    \item 
Let $x\in \mathcal{A}_a$ and $f\in \mathcal{S}_a\mathcal{(A)}$. Then we have,

\begin{eqnarray}
     && \phi(|f(ax)|^2)\nonumber\\
        &=&\alpha\phi\big(|f(ax)|^2\big)+(1-\alpha)\phi\big(|f(ax)|^2\big) \label{kline1}\\ 
         &\le&\alpha\phi(|f(ax)|^2)+(1-\alpha)\phi(f(x^*ax)) \nonumber\\ 
              &\le&\alpha\hspace{0.1 cm}\phi\bigg(\frac{  | f(axx)|+\sqrt{  f(ax^{\#_a}x) }\sqrt{ f(axx^{\#_a})}}{2}\bigg)+(1-\alpha)\phi(f(ax^{\#_a}x)) \nonumber\\
              &\le & \frac{\alpha}{2}\phi(|f(ax^2)|)+\frac{\alpha}{2}\phi(\sqrt{  f(ax^{\#_a}x) f(axx^{\#_a})})+(1-\alpha)\phi(f(ax^{\#_a}x)) \nonumber\\ 
               &\le & \frac{\alpha}{2}\phi(|f(ax^2)|)+\frac{\alpha}{2}\phi\big(\frac{  f(ax^{\#_a}x)  +f(axx^{\#_a})}{2}\big)+(1-\alpha)\phi(f(ax^{\#_a}x)) \nonumber\\ 
                 &\le & \frac{\alpha}{2}\phi(|f(ax^2)|)+\frac{\alpha}{4}\phi(f(axx^{\#_a}))+(1-\frac{3\alpha}{4})\phi(f(ax^{\#_a}x)) \label{line2}\\
                  &\le  & \frac{\alpha}{2}\phi(|f(ax^2)|)+\frac{\alpha}{4}f(\phi(axx^{\#_a}))+(1-\frac{3\alpha}{4})f(\phi(ax^{\#_a}x))\hspace{0.1 cm}, \nonumber       
\end{eqnarray}

where the third and fifth inequality hold for the convexity property of $\phi$ and first inequality comes from Lemma \ref{cauchy}. The second inequality comes from  Lemma \ref{lal}. Again, from the inequality $2pq\le p^2+q^2$ for all $p,q\in \mathbb{R}^+$, we get the forth inequality. The last inequality follows from Lemma \ref{lj}.
\item Let $x\in \mathcal{A}_a$. Then for any $f\in \mathcal{S}_a\mathcal{(A)}$, $|f(ax)|=|f(ax^{\#_a})|$ holds. Now, if we replace  $|f(ax)|$ by $|f(ax^{\#_a})|$ in the inequality (\ref{kline1}) and  proceed similarly as in part  (\ref{rms01}) of this lemma, then we get  
\begin{eqnarray}
      &&\phi\Big(\big|f(ax)\big|^2\Big)\nonumber\\
                &\le &  \frac{\alpha}{2}\phi\Big(\big|f(ax^2)\big|\Big)+\frac{\alpha}{4}\phi\big(f(ax^{\#_a}x)\big)+(1-\frac{3\alpha}{4})\phi\big(f(axx^{\#_a})\big) \label{jkline2}\\
                  &\le  & \frac{\alpha}{2}\phi\Big(\big|f(ax^2)\big|\Big)+\frac{\alpha}{4}f\big(\phi(ax^{\#_a}x)\big)+(1-\frac{3\alpha}{4})f\big(\phi(axx^{\#_a})\big)\hspace{0.1 cm}, \nonumber            
\end{eqnarray}

\end{enumerate}
\end{proof}
\begin{theorem} \label{th2} Let $\mathcal{A} $ be a unital $\mathcal{C}^*$-algebra  and  $0\le \alpha \le 1$, then we get the following results
\begin{enumerate}[\upshape (a)]
    \item \label{th2a}
       If $x\in \mathcal{A}_a$, then 
   \begin{enumerate}[\upshape (i)]
        \item
         $ v_a^2(x)\le \frac{\alpha}{2}v_a(x^2)+\big\|\frac{\alpha}{4}xx^{\#_a}+(1-\frac{3\alpha}{4})x^{\#_a}x\big\|_a$ \label{th2ai}\\ 
           \item \label{th2aii} $v_a^2(x)\le \frac{\alpha}{2}v_a(x^2)+\big\|\frac{\alpha}{4}x^{\#_a}x+(1-\frac{3\alpha}{4})xx^{\#_a}\big\|_a$.\\
    \end{enumerate}
    
    \item \label{th2b} If $x\in \mathcal{A}$ and $\phi$ be an 
Orlicz function, then 
    \begin{enumerate}[\upshape (i)]
        \item \label{th2bi} $\phi( v^2(x))\le\frac{\alpha}{2}\phi(v(x^2))+\big\|\frac{\alpha}{4}\phi(xx^{*})+(1-\frac{3\alpha}{4})\phi(x^{*}x)\big\| $\\
          \item \label{th2bii} $ \phi( v^2(x))\le\frac{\alpha}{2}\phi(v(x^2))+\big\|\frac{\alpha}{4}\phi(x^{*}x)+(1-\frac{3\alpha}{4})\phi(xx^{*})\big\|$.
    \end{enumerate}
    \end{enumerate}
\end{theorem}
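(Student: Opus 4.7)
The plan is to deduce all four inequalities directly from Lemma \ref{lemma2} by taking supremum over $f$ in the appropriate state space, after identifying the quantities on the right-hand side with a semi-norm via the $a$-self-adjoint (resp.\ self-adjoint) identity $v_a(y)=\|y\|_a$ (resp.\ $v(y)=\|y\|$).

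For part \ref{th2a}\ref{th2ai}, I would start from Lemma \ref{lemma2}\ref{rms01} with the specialization $\phi(t)=t$, which eliminates the Orlicz function and leaves
\[
\big|f(ax)\big|^2\le \frac{\alpha}{2}\big|f(ax^2)\big|+f\!\left(a\Big(\tfrac{\alpha}{4}xx^{\#_a}+(1-\tfrac{3\alpha}{4})x^{\#_a}x\Big)\right),
\]
using linearity of $f$ on the last two terms. Then I take the supremum over $f\in\mathcal{S}_a(\mathcal{A})$: the left side becomes $v_a^2(x)$, the first term on the right becomes $\frac{\alpha}{2}v_a(x^2)$, and the second term becomes $v_a$ of $y:=\frac{\alpha}{4}xx^{\#_a}+(1-\frac{3\alpha}{4})x^{\#_a}x$. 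A short verification using the identity $(x^{\#_a})^*a=ax$ (obtained by taking adjoints in $ax^{\#_a}=x^*a$ and using self-adjointness of $a$) shows that both $xx^{\#_a}$ and $x^{\#_a}x$ are $a$-self-adjoint, hence so is $y$, so by \eqref{in2} (equality for $a$-self-adjoint elements) $v_a(y)=\|y\|_a$, giving the stated bound. Part \ref{th2a}\ref{th2aii} follows by the same argument starting from Lemma \ref{lemma2}\ref{rms02} instead.

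For part \ref{th2b}\ref{th2bi}, I would specialize the ambient positive element to $a=\mathbf{1}$ in Lemma \ref{lemma2}\ref{rms01}, in which case $x^{\#_a}=x^*$, $\mathcal{S}_a(\mathcal{A})=\mathcal{S}(\mathcal{A})$, and the inequality reads
\[
\phi\!\big(|f(x)|^2\big)\le \frac{\alpha}{2}\phi\!\big(|f(x^2)|\big)+f\!\left(\tfrac{\alpha}{4}\phi(xx^{*})+(1-\tfrac{3\alpha}{4})\phi(x^{*}x)\right).
\]
Taking the supremum over $f\in\mathcal{S}(\mathcal{A})$ and using that $\phi$ is continuous and non-decreasing (so the supremum commutes with $\phi$), the left side becomes $\phi(v^2(x))$ and the first right-hand term becomes $\frac{\alpha}{2}\phi(v(x^2))$. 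The second term yields $v$ of $z:=\frac{\alpha}{4}\phi(xx^*)+(1-\frac{3\alpha}{4})\phi(x^*x)$, which is self-adjoint because $\phi$ of a self-adjoint element is self-adjoint, so $v(z)=\|z\|$ by \eqref{in1} (equality for self-adjoints). Part \ref{th2b}\ref{th2bii} is analogous starting from Lemma \ref{lemma2}\ref{rms02}.

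The main substantive step is not a computation but the justification that the linear combination appearing on the right is $a$-self-adjoint (respectively self-adjoint), since that is what converts $v_a$ into $\|\cdot\|_a$ (resp.\ $v$ into $\|\cdot\|$); once this is in place, the rest is a direct application of Lemma \ref{lemma2} and the continuity/monotonicity of $\phi$ when passing the supremum inside.
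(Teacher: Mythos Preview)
Your proposal is correct and follows essentially the same route as the paper: both start from Lemma \ref{lemma2} (with $\phi(t)=t$ for part \ref{th2a}, and with $a=\mathbf{1}$ for part \ref{th2b}) and pass to the supremum over states. The only cosmetic difference is that the paper bounds $f\big(a(\tfrac{\alpha}{4}xx^{\#_a}+(1-\tfrac{3\alpha}{4})x^{\#_a}x)\big)$ directly by $\|\cdot\|_a$ via the general inequality $|f(ay)|\le\|y\|_a$, whereas you first identify the supremum as $v_a(y)$ and then invoke the $a$-self-adjoint equality $v_a(y)=\|y\|_a$; both arguments are valid and equivalent here.
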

\begin{proof} \hspace{2 cm} \begin{enumerate}[\upshape (a)]
  \item Let $x \in \mathcal{A}_a$ and $f\in \mathcal{S}_a\mathcal{(A)}$.
  \begin{enumerate}[\upshape (i)]
      \item 
       Taking $\phi(t)=t$, $t\ge0$ in inequality \eqref{line2} and using the fact that $|f(ay)|\le \|y\|_a$ $\forall$ $y\in \mathcal{A}$, we get 
     \begin{eqnarray*}
           \big |f(ax)\big|^2 &\le &  \frac{\alpha}{2}\big|f(ax^2)\big|+\frac{\alpha}{4}f(axx^{\#_a})+(1-\frac{3\alpha}{4})f(ax^{\#_a}x)\\
           &\le &  \frac{\alpha}{2}v_a(x^2)+f\Big( a\big( \frac{\alpha}{4} xx^{\#_a}+(1-\frac{3\alpha}{4}) x^{\#_a}x\big)\Big)\\
           &\le & \frac{\alpha}{2}v_a(x^2)+\bigg\|\frac{\alpha}{4}xx^{\#_a}+(1-\frac{3\alpha}{4})x^{\#_a}x\bigg\|_a.
     \end{eqnarray*}
     Since the above inequality holds for all $f\in 
  \mathcal{S}_a\mathcal{(A)}$, so we get our required result. 
  \item  Taking $\phi(t)=t$, $t\ge0$ in inequality \eqref{jkline2}   and proceeding Similarly as in the first result of part (\ref{th2a}) of this theorem, we obtain the required inequality.
 \end{enumerate}
  
  \item Let $x \in \mathcal{A}$ and $f \in \mathcal{S(A)}$.
  \begin{enumerate}[\upshape (i)]
      \item    Taking $a=\textbf{1}$ in Lemma \ref{lemma2} (\ref{rms01}),  we get 
     \begin{eqnarray*}
           \phi\big( \big|f(x)\big|^2 \big)&\le &  \frac{\alpha}{2}\phi\big(\big|f(x^2)\big|\big)+\frac{\alpha}{4}f\big(\phi(xx^{*})\big)+(1-\frac{3\alpha}{4})f\big(\phi(x^{*}x)\big)\\
           &= &  \frac{\alpha}{2}\phi\big(\big|f(x^2)\big|\big)+f\Big(\frac{\alpha}{4}\phi(xx^{*})+(1-\frac{3\alpha}{4})\phi(x^{*}x)\Big)\\
            &\le &\frac{\alpha}{2}\phi\big(v(x^2)\big)+v\Big(\frac{\alpha}{4}\phi(xx^{*})+(1-\frac{3\alpha}{4})\phi(x^{*}x)\Big) \\
           &=& \frac{\alpha}{2}\phi\big(v(x^2)\big)+\bigg\|\frac{\alpha}{4}\phi(xx^{*})+(1-\frac{3\alpha}{4})\phi(x^{*}x)\bigg\|.
     \end{eqnarray*}
    Now, taking supremum over $f\in 
  \mathcal{S}\mathcal{(A)}$, we get
  \begin{eqnarray*}
            \phi\big( v^2(x)\big)\le\frac{\alpha}{2}\phi\big(v(x^2)\big)+\bigg\|\frac{\alpha}{4}\phi(xx^{*})+(1-\frac{3\alpha}{4})\phi(x^{*}x)\bigg\|.
    \end{eqnarray*}
     \item  Taking $a=\textbf{1}$ in Lemma \ref{lemma2} (\ref{rms02}) and proceeding Similarly as in the first result of part (\ref{th2b}) of this  theorem,  we obtain that required inequality.
    \end{enumerate}
  \end{enumerate}
 \end{proof}
\begin{cor}\label{ccc}
    Let $\mathcal{A} $ be a unital $\mathcal{C}^*$-algebra and  $x\in \mathcal{A}$. If $0\le \alpha \le 1$ and $r\ge 1$, then 
    \begin{enumerate}[\upshape (i)]
        \item  $ v^{2r}(x)\le\frac{\alpha}{2}v^r(x^2)+\big\|\frac{\alpha}{4}(xx^{*})^{r}+(1-\frac{3\alpha}{4})(x^*x)^{r}\big\| $\\
          \item $ v^{2r}(x)\le\frac{\alpha}{2}v^r(x^2)+\big\|\frac{\alpha}{4}(x^*x)^{r}+(1-\frac{3\alpha}{4})(xx^{*})^{r}\big\|$.
    \end{enumerate}
\end{cor}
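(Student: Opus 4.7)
The plan is to obtain both inequalities as direct specializations of Theorem \ref{th2}(\ref{th2b}) with the particular choice $\phi(t)=t^{r}$ for $t\ge 0$ and $r\ge 1$. The first step is to verify that $\phi(t)=t^{r}$ qualifies as a non-degenerate Orlicz function under the definition used in this paper: it is continuous on $[0,\infty)$, convex (since $r\ge 1$), non-decreasing, satisfies $\phi(0)=0$, tends to $\infty$ as $t\to\infty$, and is strictly positive for $t>0$. Hence Theorem \ref{th2}(\ref{th2b}) is applicable.

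Next I would substitute this $\phi$ into parts (\ref{th2bi}) and (\ref{th2bii}) of Theorem \ref{th2}(\ref{th2b}). On the left-hand side, $\phi\bigl(v^{2}(x)\bigr)=\bigl(v^{2}(x)\bigr)^{r}=v^{2r}(x)$. In the first term on the right, $\phi\bigl(v(x^{2})\bigr)=v^{r}(x^{2})$. For the norm term, since $x^{*}x$ and $xx^{*}$ are positive elements of $\mathcal{A}$ and the functional calculus commutes with $\phi(t)=t^{r}$, we have $\phi(x^{*}x)=(x^{*}x)^{r}$ and $\phi(xx^{*})=(xx^{*})^{r}$. Collecting these substitutions in part (\ref{th2bi}) yields inequality (i) of the corollary, and applying them in part (\ref{th2bii}) yields (ii).

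There is really no obstacle here beyond bookkeeping; the only point that needs a moment of care is that $\phi(t)=t^{r}$ truly belongs to the class of Orlicz functions that Theorem \ref{th2}(\ref{th2b}) requires, which is immediate from $r\ge 1$. The entire argument is therefore a one-line specialization, analogous to the way Corollary \ref{re01} is derived from Theorem \ref{th1}(\ref{th1b}) and Corollary \ref{re02} is derived from Theorem \ref{ramm}.
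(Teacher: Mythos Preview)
Your proposal is correct and matches the paper's proof exactly: the paper simply states that taking $\phi(t)=t^{r}$ for $t\ge 0$ and $r\ge 1$ in Theorem \ref{th2}(\ref{th2b}) yields the required results. Your additional verification that $t^{r}$ is a non-degenerate Orlicz function and your explicit tracking of the substitutions are fine elaborations of that one-line specialization.
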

\begin{proof}
    If we take $\phi(t)=t^r$, $t\ge 0$ and $r\ge 1$ in Theorem \ref{th2} (\ref{th2b}),  then we get the required results.
\end{proof}
 
\begin{remark} Let $\mathcal{A} $ be a unital $\mathcal{C}^*$-algebra and  $0\le \alpha \le 1$, then we get the following results:
    \begin{enumerate}[\upshape (i)]
        \item  If we we put $\alpha=1 $  in Theorem \ref{th2} (\ref{th2a}), it becomes \cite[Theorem 3.3 ]{mabrouk2023extension}.
        \item If we consider $\mathcal{A}=\mathcal{B}(H)$ in Theorem \ref{th2} (\ref{th2a}), we get \cite[Theorem 3]{bhunia2021improvement}. 
          \item   If we consider $\mathcal{A}=\mathcal{B}(H)$, the Corollary \ref{ccc} becomes  \cite[Theorem 2.11]{bhunia2021proper}. 
        
 \item  if we  take $\phi(t)=t$, $t\ge 0$ and $\alpha=1 $ in Theorem \ref{th2} (\ref{th2b}),    we get the upper bound of  \cite[Theorem 2.4 ]{zamani2019characterization}.
        
    \end{enumerate}
\end{remark}
 In order to obtain our next inequality that gives an upper bound for the $a$-numerical radius of sum of two elements of $\mathcal{A}_a$, we need the following lemma. 

\begin{lemma}\label{L01}
 Let $x,y\in \mathcal{A}_a $.  If $\psi_1$ be the complementary Orlicz function of $\psi_2$ and $ f\in \mathcal{S}_a\mathcal{(A)}$, then  
      \begin{eqnarray*}
         && \big|f\big(a(x+y)\big)\big|^2 \\  &\le&|f(ax)|^2+|f(ay)|^2+|f(ayx)|+\psi_1\Big(\sqrt{f(ax^{\#_a}x)}\Big)+\psi_2\Big(\sqrt{f(ayy^{\#_a})}\Big).
    \end{eqnarray*}
\end{lemma}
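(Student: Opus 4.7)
The plan is to reduce the statement to two clean ingredients already in the excerpt: the Buzano-type bound from Lemma \ref{lal} with $n=1$, and Young's inequality for complementary Orlicz functions from Lemma \ref{young}.

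First, I would use linearity of $f$ together with the elementary estimate $|z_1+z_2|^2 \le (|z_1|+|z_2|)^2$ for complex numbers, to write
\[
\bigl|f\bigl(a(x+y)\bigr)\bigr|^2 \;=\; \bigl|f(ax)+f(ay)\bigr|^2 \;\le\; |f(ax)|^2 + |f(ay)|^2 + 2\,|f(ax)|\,|f(ay)|.
\]
Thus it suffices to control the cross term $2|f(ax)||f(ay)|$.

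Next, I would apply Lemma \ref{lal} with $n=1$, $p_1 = 1$, $x_1 = x$, $y_1 = y$ (the hypothesis $x,y \in \mathcal{A}_a$ guarantees the existence of $x^{\#_a}$ and $y^{\#_a}$), which immediately yields
\[
|f(ax)|\,|f(ay)| \;\le\; \tfrac{1}{2}\sqrt{f(ax^{\#_a}x)}\,\sqrt{f(ayy^{\#_a})} + \tfrac{1}{2}|f(ayx)|.
\]
Multiplying by $2$ and substituting back, the claim reduces to showing
\[
\sqrt{f(ax^{\#_a}x)}\,\sqrt{f(ayy^{\#_a})} \;\le\; \psi_1\!\bigl(\sqrt{f(ax^{\#_a}x)}\bigr) + \psi_2\!\bigl(\sqrt{f(ayy^{\#_a})}\bigr).
\]
This is exactly Young's inequality (Lemma \ref{young}) applied to the nonnegative scalars $\sqrt{f(ax^{\#_a}x)}$ and $\sqrt{f(ayy^{\#_a})}$, which are nonnegative because $f$ is positive and the elements $x^{\#_a}x$ (in the sense that $a x^{\#_a} x = x^* a x$) and $yy^{\#_a}$ give rise to nonnegative values under $f$ via the expressions $f(x^*ax)$ and $f((y^{\#_a})^*ay^{\#_a})$.

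Assembling the three inequalities yields the claimed bound. The calculation is essentially routine; the only genuinely non-trivial step is recognizing that Lemma \ref{lal} with $n=1$ is the correct tool to split $2|f(ax)||f(ay)|$ into a term involving $|f(ayx)|$ plus a product of square roots, which is precisely the shape needed so that Young's inequality for complementary Orlicz functions can be invoked to produce the $\psi_1$ and $\psi_2$ terms appearing on the right-hand side.
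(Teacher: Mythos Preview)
Your proof is correct and follows essentially the same approach as the paper: expand $|f(a(x+y))|^2$ by the triangle inequality, apply Lemma~\ref{lal} with $n=1$ to control the cross term $2|f(ax)||f(ay)|$, and then invoke Young's inequality (Lemma~\ref{young}) for the product of square roots. Your additional remark about nonnegativity of $f(ax^{\#_a}x)$ and $f(ayy^{\#_a})$ is a welcome clarification that the paper omits.
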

      \begin{proof} Let $x,y\in \mathcal{A}_a $  and $ f\in \mathcal{S}_a\mathcal{(A)}$. Then we have
           \begin{eqnarray*}
 && |f\big(a(x+y)\big)|^2\\
 &\le& |f(ax)|^2+|f(ay)|^2+2|f(ax)||f(ay)|\\
     &\le &|f(ax)|^2+|f(ay)|^2+|f(ayx)|+\sqrt{f(ax^{\#_a}x)}\sqrt{f(ayy^{\#_a})}\\
&\le&|f(ax)|^2+|f(ay)|^2+|f(ayx)|+\psi_1\Big(\sqrt{f(ax^{\#_a}x)}\Big)+\psi_2\Big(\sqrt{f(ayy^{\#_a})}\Big)\hspace{0.1 cm},
     \end{eqnarray*}
     where the second inequality comes from Lemma \ref{lal} and third inequality holds from Lemma \ref{young}.
      \end{proof}
      \begin{remark}\label{p001}
Taking $\psi_1(t)=\frac{t^2}{2}=\psi_2(t)$ in Lemma \ref{L01}, we get
         \begin{eqnarray*}
             && \big|f\big(a(x+y)\big)\big|^2\\
&\le&|f(ax)|^2+|f(ay)|^2+|f(ayx)|+\frac{f(ax^{\#_a}x)+f(ayy^{\#_a})}{2}\\
&\le& v_a^2(x)+v_a(y)^2+v_a(yx)+\frac{f\big(a(x^{\#_a}x+yy^{\#_a})\big)}{2}\\
&\le& v_a^2(x)+v_a(y)^2+v_a(yx)+\frac{\big\|x^{\#_a}x+yy^{\#_a}\big\|_a}{2}.
         \end{eqnarray*}
         
         Taking supremum over $f\in 
  \mathcal{S}_a\mathcal{(A)}$, we get  \begin{eqnarray}
         v_a^2(x+y)   &\le &  v_a^2(x)+v_a^2(y)+v_a(yx)+\frac{1}{2}\big\|x^{\#_a}x+yy^{\#_a}\big\|_a.\label{hhnn}
         \end{eqnarray}
Now, if we take $\mathcal{A}=\mathcal{B}(H)$, then the inequality (\ref{hhnn}) becomes that in   \cite[ Theorem 3.6]{bhunia2020numerical}.
\end{remark}  
      \begin{theorem} \label{sum2}
           Let $x,y\in \mathcal{A}_a $. Then for any $n\ge 1$
      \begin{eqnarray*}
         v_a^2(x+y)   &\le &  v_a^2(x)+v_a^2(y)+v_a(yx)+\frac{1}{n}\big\|x\|_a^n+\frac{n-1}{n}\|y\big\|_a^{\frac{n}{n-1}}.
         \end{eqnarray*}
      \end{theorem}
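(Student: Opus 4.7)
The plan is to apply Lemma \ref{L01} with a judiciously chosen pair of complementary Orlicz functions so that the two abstract bounds $\psi_1(\sqrt{f(ax^{\#_a}x)})$ and $\psi_2(\sqrt{f(ayy^{\#_a})})$ collapse to the two power-type terms $\tfrac{1}{n}\|x\|_a^n$ and $\tfrac{n-1}{n}\|y\|_a^{n/(n-1)}$ appearing in the conclusion. Guided by the standard pairing $\phi(u)=u^p/p\leftrightarrow\psi(u)=u^q/q$ with $\tfrac{1}{p}+\tfrac{1}{q}=1$ mentioned right after Lemma \ref{young}, the natural choice for $n>1$ is
\[
\psi_1(t)=\frac{t^{n}}{n},\qquad \psi_2(t)=\frac{n-1}{n}\,t^{\,n/(n-1)},
\]
which are complementary since $\tfrac{1}{n}+\tfrac{n-1}{n}=1$.

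\textbf{Execution.} First I would fix an arbitrary $f\in\mathcal{S}_a(\mathcal{A})$ and insert the above $\psi_1,\psi_2$ into Lemma \ref{L01} to obtain
\[
\bigl|f\bigl(a(x+y)\bigr)\bigr|^{2}\le |f(ax)|^{2}+|f(ay)|^{2}+|f(ayx)|+\frac{\bigl(f(ax^{\#_a}x)\bigr)^{n/2}}{n}+\frac{n-1}{n}\bigl(f(ayy^{\#_a})\bigr)^{n/(2(n-1))}.
\]
Next I would estimate each of the last two terms by the semi-norm. For the first, since $ax^{\#_a}=x^{*}a$, we have $f(ax^{\#_a}x)=f(x^{*}ax)\le\|x\|_{a}^{2}$ straight from the definition of $\|\cdot\|_a$. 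For the second, using the identity $\|y^{\#_a}\|_a=\|y\|_a$ recorded in the preliminaries, together with the calculation $(y^{\#_a})^{*}a=a y$ (take the adjoint of $ay^{\#_a}=y^{*}a$ and use $a^{*}=a$), we get
\[
f(ayy^{\#_a})=f\bigl((y^{\#_a})^{*}a\,y^{\#_a}\bigr)\le\|y^{\#_a}\|_{a}^{2}=\|y\|_{a}^{2}.
\]
Substituting these bounds, together with $|f(ax)|^2\le v_a^2(x)$, $|f(ay)|^2\le v_a^2(y)$ and $|f(ayx)|\le v_a(yx)$, yields
\[
\bigl|f\bigl(a(x+y)\bigr)\bigr|^{2}\le v_{a}^{2}(x)+v_{a}^{2}(y)+v_{a}(yx)+\frac{1}{n}\|x\|_{a}^{n}+\frac{n-1}{n}\|y\|_{a}^{\,n/(n-1)}.
\]
Taking the supremum over $f\in\mathcal{S}_a(\mathcal{A})$ on the left produces the claimed inequality.

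\textbf{Expected difficulty.} The only nontrivial step is the rewriting $ayy^{\#_a}=(y^{\#_a})^{*}ay^{\#_a}$, which is what allows $f(ayy^{\#_a})$ to be controlled by $\|y\|_a^2$; this is essentially the observation that the $a$-adjoint operation interacts with the left-multiplication by $a$ in the expected way, and it is the one place where the asymmetry between the $x^{\#_a}x$ and $yy^{\#_a}$ slots in Lemma \ref{L01} requires attention. The verification that the pair $(\psi_1,\psi_2)$ is genuinely complementary in the sense of the paper (computing $p_1(t)=t^{n-1}$, its right inverse $q_1(s)=s^{1/(n-1)}$, and integrating) is routine. Finally, the stated range $n\ge 1$ should be read as $n>1$ so that the exponent $n/(n-1)$ is well defined; the limiting case $n=1$ degenerates.
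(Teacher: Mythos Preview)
Your proof is correct and follows essentially the same path as the paper's: the same choice $\psi_1(t)=t^{n}/n$, $\psi_2(t)=\tfrac{n-1}{n}t^{n/(n-1)}$ in Lemma \ref{L01}, the same bound $f(ax^{\#_a}x)\le\|x\|_a^{2}$ and $f(ayy^{\#_a})\le\|y^{\#_a}\|_a^{2}=\|y\|_a^{2}$, followed by the supremum over $f$. Your explicit justification of the rewriting $f(ayy^{\#_a})=f\bigl((y^{\#_a})^{*}ay^{\#_a}\bigr)$ and your remark that the statement really requires $n>1$ are both correct and slightly more careful than the paper's own argument.
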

      \begin{proof}
    Let $x,y\in \mathcal{A}_a $.  Taking $\psi_1(t)=\frac{t^n}{n}$ and $\psi_2(t)=\frac{n-1}{n}t^{\frac{n}{n-1}}$, $t\ge 0$ and $n\ge 1$ in Lemma \ref{L01}, we get
         \begin{eqnarray*}
             && \big|f\big(a(x+y)\big)\big|^2\\
&\le&|f(ax)|^2+|f(ay)|^2+|f(ayx)|+\frac{1}{n}\big(f(ax^{\#_a}x)\big)^{\frac{n}{2}}\\
&&+\frac{n-1}{n}\big(f(ayy^{\#_a})\big)^{\frac{n}{2(n-1)}}\\
&\le& v_a^2(x)+v_a^2(y)+v_a(yx)+\frac{1}{n}\|x\|_a^{n}+\frac{n-1}{n}\|y^{\#_a}\|_a^{\frac{n}{n-1}}\\
&=&v_a^2(x)+v_a^2(y)+v_a(yx)+\frac{1}{n}\big\|x\|_a^n+\frac{n-1}{n}\|y\big\|_a^{\frac{n}{n-1}}.
         \end{eqnarray*}
         Now, taking supremum over $f\in 
  \mathcal{S}_a\mathcal{(A)}$, we get the desired inequality.
      \end{proof}
      Next, we consider an example and show that the bound obtained in Theorem $\ref{sum2}$ is better than that given in \cite[Theorem 3.6]{bhunia2020numerical}.
 \begin{example}
    Consider $\mathcal{A}=\mathcal{B}(H) $, $T=\begin{pmatrix}
    \frac{1}{2} & 0\\
    0 & \frac{1}{3}
\end{pmatrix} $, $ S=\begin{pmatrix}
    \frac{1}{4} & 0\\
    0 & \frac{1}{5}
\end{pmatrix} $, 
   and $A=\begin{pmatrix}
    1 & 0\\
    0 & 1
\end{pmatrix} $. 
Then Theorem 3.6 in \cite{bhunia2020numerical} gives $w^2(T+S)\le  0.59375$, whereas for $n=3$ Theorem \ref{sum2} gives $w^2(T+S)\le 0.5625$.

    \end{example}

    Next we obtain the following inequality for the sum of $n$ elements of $\mathcal{A}_a$. 
\begin{theorem}
   Let $\phi$ be an Orlicz function and $x_i\in \mathcal{A}_a$,  $i=1,2,\cdots,n$. If $n\ge 1$, then
  \begin{eqnarray*}
            && \phi\Big(v_a^2\big(\sum_{i=1}^{n}p_ix_i\big)\Big)\\
            &\le& \frac{1}{2} \phi\Big(v_a\big(\sum_{i=1}^{n}p_ix_i^2\big)\Big)+\frac{1}{2} \phi\Big(\big\|\sum_{i=1}^{n}p_ix_i^{\#_a}x_i\big\|_a^{\frac{1}{2}} \big\|\sum_{i=1}^{n}p_ix_ix_i^{\#_a}\big\|_a^{\frac{1}{2}}\Big),
         \end{eqnarray*} where $P=( p_1,p_2,\cdots,p_n) $ be a probability distribution  such that $p_i\ge0$ for all $i=1,2,\cdots,n$ and $\sum_{i=1}^{n} p_i=1$.   \label{sumpi}
\end{theorem}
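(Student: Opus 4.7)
The plan is to specialize Lemma~\ref{lal} to the diagonal case $y_i=x_i$, pass the whole inequality through $\phi$ using its monotonicity and convexity, estimate each summand by a supremum-type bound, and finally take the supremum over $\mathcal{S}_a(\mathcal{A})$. Applying Lemma~\ref{lal} with $y_i=x_i$ and using linearity of $f$ (so that $\sum_{i=1}^n p_i f(ax_i)=f(a\sum_{i=1}^n p_i x_i)$, and similarly for the $x_i^{\#_a}x_i$, $x_ix_i^{\#_a}$, $x_i^2$ sums) yields, for every $f\in\mathcal{S}_a(\mathcal{A})$,
\[
\Bigl|f\Bigl(a\sum_{i=1}^{n}p_i x_i\Bigr)\Bigr|^{2}\le \frac{1}{2}\sqrt{f\bigl(a\!\sum p_i x_i^{\#_a}x_i\bigr)}\sqrt{f\bigl(a\!\sum p_i x_i x_i^{\#_a}\bigr)}+\frac{1}{2}\Bigl|f\Bigl(a\!\sum p_i x_i^{2}\Bigr)\Bigr|.
\]

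Next, write $X=\sum p_i x_i$, $Y=\sum p_i x_i^{\#_a}x_i$, $Z=\sum p_i x_i x_i^{\#_a}$, and $W=\sum p_i x_i^{2}$. Since $\phi$ is non-decreasing and convex, applying $\phi$ to the above inequality and splitting the two-term sum gives
\[
\phi\bigl(|f(aX)|^{2}\bigr)\le \frac{1}{2}\,\phi\!\left(\sqrt{f(aY)}\sqrt{f(aZ)}\right)+\frac{1}{2}\,\phi\!\left(|f(aW)|\right).
\]
Each $x_i^{*}ax_i$ is positive, so $f(aY)=\sum p_i f(x_i^{*}ax_i)\ge 0$ and hence $f(aY)\le \|Y\|_a$; an identical computation gives $f(aZ)\le \|Z\|_a$. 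Combined with $|f(aW)|\le v_a(W)$ and the monotonicity of $\phi$, this produces
\[
\phi\bigl(|f(aX)|^{2}\bigr)\le \frac{1}{2}\,\phi\!\left(\|Y\|_a^{1/2}\|Z\|_a^{1/2}\right)+\frac{1}{2}\,\phi\!\left(v_a(W)\right).
\]

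To finish, I take the supremum over $f\in\mathcal{S}_a(\mathcal{A})$. The right-hand side is now independent of $f$; on the left, because $\phi$ is continuous and non-decreasing and the quantity $|f(aX)|^{2}$ is bounded (as $X\in\mathcal{A}_a$), one has $\sup_f \phi(|f(aX)|^{2})=\phi(\sup_f |f(aX)|^{2})=\phi(v_a^{2}(X))$. Substituting $X,Y,Z,W$ back in their summation form recovers the stated inequality. The only mildly delicate step is this last interchange of $\sup$ and $\phi$; everything else is a bookkeeping consequence of Lemma~\ref{lal}, the convexity/monotonicity of $\phi$, and the standard estimates $|f(a\,\cdot)|\le \|\cdot\|_a$ and $|f(a\,\cdot)|\le v_a(\cdot)$.
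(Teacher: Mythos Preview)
Your proof is correct and follows essentially the same route as the paper: apply Lemma~\ref{lal} with $y_i=x_i$, push the inequality through $\phi$ via monotonicity and convexity, bound the terms by $\|\cdot\|_a$ and $v_a(\cdot)$, and take the supremum over $f\in\mathcal{S}_a(\mathcal{A})$. The only cosmetic difference is that you first make the right-hand side $f$-free and then take the supremum, whereas the paper carries the $|f(aW)|$ term to the last line before taking the supremum; you also spell out explicitly the (harmless) interchange of $\sup$ and $\phi$, which the paper leaves implicit.
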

\begin{proof} Let $x_i\in \mathcal{A}_a$ and $f \in \mathcal{S}_a\mathcal{(A)}$. Then we get
    \begin{eqnarray*}
            &&\phi\Big(\big|f\big(a\sum_{i=1}^{n}p_ix_i\big)\big|^2\Big)\\
            &=&\phi\Big(\big|f\big(\sum_{i=1}^{n}p_iax_i\big)\big|^2\Big)\end{eqnarray*}
            \begin{eqnarray*}
            &\le & \phi\bigg(\frac{  \big| \sum_{i=1}^{n}p_if(ax_i^2)\big|+\sqrt{  \sum_{i=1}^{n}p_if(ax_i^{\#_a}x_i) }\sqrt{ \sum_{i=1}^{n}p_if(ax_ix_i^{\#_a})}}{2}\bigg)\\
             &\le &  \frac{1}{2}\phi\Big( \big| \sum_{i=1}^{n}p_if(ax_i^2)\big|\Big)+\frac{1}{2}\phi\bigg(\sqrt{  \sum_{i=1}^{n}p_if(ax_i^{\#_a}x_i) }\sqrt{ \sum_{i=1}^{n}p_if(ax_ix_i^{\#_a})}\bigg)\\
              &= &  \frac{1}{2}\phi\Big( \big| f\big(a\sum_{i=1}^{n}p_ix_i^2\big)\big|\Big)+\frac{1}{2}\phi\bigg(\sqrt{  f\big(a\sum_{i=1}^{n}p_ix_i^{\#_a}x_i\big) }\sqrt{ f\big(a\sum_{i=1}^{n}p_ix_ix_i^{\#_a}\big)}\bigg)\\
             & \le&  \frac{1}{2}\phi\Big( \big| f\big(a\sum_{i=1}^{n}p_ix_i^2\big)\big|\Big)+\frac{1}{2} \phi\Big(\big\|\sum_{i=1}^{n}p_ix_i^{\#_a}x_i\big\|_a^{\frac{1}{2}} \big\|\sum_{i=1}^{n}p_ix_ix_i^{\#_a}\big\|_a^{\frac{1}{2}}\Big),
        \end{eqnarray*}
        where Lemma \ref{lal}  and non-decreasing property of $\phi$ give us  the first inequality and the second inequality holds for the convexity property of $\phi$. Now, using the fact  that $|f(ay)|\le \|y\|_a$ for all $y\in \mathcal{A}$, we obtain the last inequality.
        The desired inequality is obtained by taking supremum over $f\in \mathcal{S}_a\mathcal{(A)}$ on both sides of the inequality above.
       
\end{proof}
\begin{remark} Let $\mathcal{A}=\mathcal{B}(H)$ and $a$ be the identity operator. Now, if we take $\phi(t)=t$, $t\ge0$ in Theorem \ref{sumpi}, we get 
\cite[Theorem 2.6]{dragomir2017generalizations}.\end{remark}
       To prove the next theorem we need the following lemma.
\begin{lemma}\label{pie}
     Let $x,y,z\in \mathcal{A}_a$, $\|y\|_a\le 1$ and $\phi$ be an Orlicz function. If $a\ge \textbf{1}$ and $f\in\mathcal{S}_a\mathcal{(A)}$, then 
  \begin{eqnarray*} 
  \phi\Big( \big|f(axyz^{\#_a})\big|\Big)&\le& \frac{\|y\|_a}{2} \Big(f\big(\phi(axx^{\#_a})+\phi(azz^{\#_a})\big)\Big).
 \end{eqnarray*}

\end{lemma}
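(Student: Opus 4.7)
The plan is to bound $|f(axyz^{\#_a})|$ via two successive Cauchy–Schwarz-type estimates, then push $\phi$ inside using convexity, subhomogeneity (which is available because $\|y\|_a\le 1$), and Lemma \ref{lj}.

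First I would rewrite the argument of $f$ to apply Lemma \ref{cauchy}. Taking adjoints in $ax^{\#_a}=x^*a$ gives the identity $(x^{\#_a})^*a=ax$, so
\[
axyz^{\#_a} \;=\; (x^{\#_a})^*\,a\,(yz^{\#_a}).
\]
Lemma \ref{cauchy} with $u=x^{\#_a}$, $v=yz^{\#_a}$ then yields
\[
|f(axyz^{\#_a})|^2 \;\le\; f\bigl((x^{\#_a})^*ax^{\#_a}\bigr)\,f\bigl((yz^{\#_a})^*a(yz^{\#_a})\bigr)
\;=\; f(axx^{\#_a})\,f\bigl((z^{\#_a})^*y^*ay\,z^{\#_a}\bigr),
\]
where the last equality uses $(x^{\#_a})^*a=ax$ again. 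Next, Lemma \ref{lsplemma02} applied with the element $y$ (playing the role of $x$) and $z^{\#_a}$ (playing the role of $y$) gives
\[
f\bigl((z^{\#_a})^*y^*ay\,z^{\#_a}\bigr) \;\le\; \|y\|_a^{\,2}\,f\bigl((z^{\#_a})^*az^{\#_a}\bigr) \;=\; \|y\|_a^{\,2}\,f(azz^{\#_a}).
\]
Combining and taking square roots,
\[
|f(axyz^{\#_a})| \;\le\; \|y\|_a\,\sqrt{f(axx^{\#_a})}\,\sqrt{f(azz^{\#_a})}.
\]

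Now I would apply $\phi$ and use its four properties in turn: monotonicity, subhomogeneity on $[0,1]$, AM--GM combined with monotonicity, and convexity. Since $\phi$ is non-decreasing and $\|y\|_a\le 1$ with $\phi(0)=0$ (so $\phi(\lambda t)\le\lambda\phi(t)$ for $\lambda\in[0,1]$),
\[
\phi\bigl(|f(axyz^{\#_a})|\bigr) \;\le\; \|y\|_a\,\phi\!\left(\sqrt{f(axx^{\#_a})\,f(azz^{\#_a})}\right) \;\le\; \|y\|_a\,\phi\!\left(\tfrac{f(axx^{\#_a})+f(azz^{\#_a})}{2}\right),
\]
and convexity of $\phi$ gives
\[
\phi\!\left(\tfrac{f(axx^{\#_a})+f(azz^{\#_a})}{2}\right) \;\le\; \tfrac{1}{2}\bigl[\phi\bigl(f(axx^{\#_a})\bigr)+\phi\bigl(f(azz^{\#_a})\bigr)\bigr].
\]

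Finally I would apply Lemma \ref{lj} to each of the two terms. For this I need $xx^{\#_a}$ and $zz^{\#_a}$ to be $a$-self-adjoint with $axx^{\#_a},\,azz^{\#_a}$ having spectrum in $[0,\infty)$; both facts follow immediately since $axx^{\#_a}=(x^{\#_a})^*ax^{\#_a}\ge 0$ (and similarly for $z$), using $a\ge \mathbf{1}\ge 0$ and the identity $(x^{\#_a})^*a=ax$. Hypothesis $a\ge\mathbf{1}$ thus feeds both Lemma \ref{lj} and the $\phi$-subhomogeneity step. Then $\phi(f(axx^{\#_a}))\le f(\phi(axx^{\#_a}))$ and likewise for $z$, so linearity of $f$ combines the terms into
\[
\phi\bigl(|f(axyz^{\#_a})|\bigr) \;\le\; \tfrac{\|y\|_a}{2}\,f\bigl(\phi(axx^{\#_a})+\phi(azz^{\#_a})\bigr),
\]
which is the claim. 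The only step requiring genuine care is the bookkeeping of $a$-adjoints to justify $(x^{\#_a})^*a x^{\#_a}=axx^{\#_a}$ and the positivity/$a$-self-adjointness needed for Lemma \ref{lj}; everything else is a straightforward concatenation of the preceding lemmas.
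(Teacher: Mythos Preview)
Your proof is correct and follows essentially the same route as the paper: rewrite $axyz^{\#_a}=(x^{\#_a})^*a(yz^{\#_a})$, apply Lemma~\ref{cauchy} and Lemma~\ref{lsplemma02} to get $|f(axyz^{\#_a})|\le\|y\|_a\sqrt{f(axx^{\#_a})f(azz^{\#_a})}$, then use monotonicity, subhomogeneity, AM--GM, convexity, and Lemma~\ref{lj} in that order. One small remark: your parenthetical that ``$a\ge\mathbf{1}$ \ldots\ feeds \ldots\ the $\phi$-subhomogeneity step'' is a slip---that step is driven by $\|y\|_a\le 1$ and $\phi(0)=0$, while $a\ge\mathbf{1}$ is used only for Lemma~\ref{lj}.
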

\begin{proof}
  Let $x,y,z\in \mathcal{A}_a$ and $f\in \mathcal{S}_a\mathcal{(A)}$. Then we have,
\begin{eqnarray}
   \big |f\big(axyz^{\#_a}\big)\big|&=&  |f\big((yz^{\#_a})^* x^{*}a\big)| \nonumber\\
    &=&  \big|f\big((yz^{\#_a})^*ax^{\#_a}\big)\big|\nonumber\\
    &\le& \big|f\big((x^{\#_a})^*ax^{\#_a}\big)\big|^{\frac{1}{2}}\big|f\big((yz^{\#_a})^*ayz^{\#_a}\big)\big|^{\frac{1}{2}}\nonumber\\
     &=& \big|f\big(axx^{\#_a}\big)\big|^{\frac{1}{2}}\big|f\big((z^{\#_a})^*y^*ayz^{\#_a}\big)\big|^{\frac{1}{2}}\nonumber\\
&\le & \|y\|_a\big|f\big(axx^{\#_a}\big)\big|^{\frac{1}{2}}\big|f\big((z^{\#_a})^*az^{\#_a}\big)\big|^{\frac{1}{2}}\nonumber\\
&= & \|y\|_a\big|f\big(axx^{\#_a}\big)\big|^{\frac{1}{2}}\big|f\big(azz^{\#_a}\big)\big|^{\frac{1}{2}}\nonumber\\
&\le & \|y\|_a\frac{f\big(axx^{\#_a}\big)+f\big(azz^{\#_a}\big)}{2}\hspace{0.1 cm},\nonumber\label{l4.7}
\end{eqnarray}
where the first equality comes from the fact that $f(y^*)=\overline{f(y)}$ for all $y\in \mathcal{A}$ and the first inequality holds from Lemma \ref{cauchy}. Now, the second inequality follows from Lemma \ref{lsplemma02}  and the last inequality holds because  $2pq\le p^2+q^2$ for any $p,q\in \mathbb{R}$. 
  Now, using the convexity, non-decreasing property of $\phi$ and $\phi(\lambda t)\le \lambda \phi(t) $ for $0\le\lambda\le 1 , t\ge 0$, we get
  \begin{eqnarray*}
      \phi\big(\big |f(axyz^{\#_a})\big|\big)&\le& \phi\bigg( \|y\|_a \frac{f\big(axx^{\#_a}\big)+f\big(azz^{\#_a}\big)}{2}\bigg)\\
    &\le& \frac{\|y\|_a}{2} \bigg(\phi\big(f(axx^{\#_a})\big)+\phi\big(f(azz^{\#_a})\big)\bigg).
  \end{eqnarray*}
Again, from Lemma \ref{lj}, we get 
\begin{eqnarray*}
     \phi\big(\big |f(axyz^{\#_a})\big|\big)&\le& \frac{\|y\|_a}{2} \bigg(f\big(\phi(axx^{\#_a})\big)+f\big(\phi(azz^{\#_a})\big)\bigg)\\
     &=& \frac{\|y\|_a}{2} \bigg(f\big(\phi(axx^{\#_a})+\phi(azz^{\#_a})\big)\bigg).
\end{eqnarray*}
\end{proof}
Now, we prove the following result.
\begin{theorem}\label{piet}
    Let $\mathcal{A} $ be a unital $\mathcal{C}^*$-algebra, then  the following results hold:
    \begin{enumerate}[\upshape (a)]
        \item\label{pieta} If $x,y,z\in \mathcal{A}_a$ and $\|y\|_a\le 1$, then 
        \begin{eqnarray*}
            v_a\big(xyz^{\#_a}\big) \le \frac{\|y\|_a}{2}\big\|xx^{\#_a}+zz^{\#_a}\big\|_a.
        \end{eqnarray*}
        \item \label{pietb}If $x,y,z\in \mathcal{A}$, $\|y\|\le 1$ and $\phi$ be an Orlicz function, then 
  \begin{eqnarray*} 
  \phi\Big( v\big(xyz^*\big)\Big)&\le& \frac{\|y\|}{2} \Big\|\phi(xx^*)+\phi(zz^*)\Big\|.
 \end{eqnarray*}
    \end{enumerate}
\end{theorem}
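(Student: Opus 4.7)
The plan is to derive both parts of Theorem \ref{piet} as direct specializations of Lemma \ref{pie}, adjusting for the slightly different hypotheses in each case.

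For part \ref{pieta}, I would apply Lemma \ref{pie} with $\phi(t)=t$, $t\ge 0$. The hypothesis $a\ge\mathbf{1}$ of Lemma \ref{pie} enters its proof only via Lemma \ref{lj}, which is used to exchange $\phi(f(\cdot))$ with $f(\phi(\cdot))$; when $\phi$ is the identity this exchange is a trivial equality, so the conclusion of Lemma \ref{pie} with $\phi=\mathrm{id}$ remains valid without assuming $a\ge\mathbf{1}$. The specialized inequality reads, for every $f\in\mathcal{S}_a\mathcal{(A)}$,
\[
\bigl|f(axyz^{\#_a})\bigr|\le \frac{\|y\|_a}{2}\, f\bigl(a(xx^{\#_a}+zz^{\#_a})\bigr).
\]
Using the elementary seminorm bound $|f(aw)|\le \|w\|_a$ for $w\in\mathcal{A}^a$ (which is a consequence of Lemma \ref{cauchy}) and taking the supremum over $f\in\mathcal{S}_a\mathcal{(A)}$ yields the desired bound $v_a(xyz^{\#_a})\le \tfrac{\|y\|_a}{2}\|xx^{\#_a}+zz^{\#_a}\|_a$.

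For part \ref{pietb}, I would specialize Lemma \ref{pie} to $a=\mathbf{1}$. The hypothesis $a\ge\mathbf{1}$ is trivially satisfied; the set $\mathcal{S}_a\mathcal{(A)}$ reduces to $\mathcal{S(A)}$; the seminorm $\|\cdot\|_a$ collapses to the $\mathcal{C}^*$-norm; and $x^{\#_a}=x^*$. The lemma then gives, for every $f\in\mathcal{S(A)}$,
\[
\phi\bigl(|f(xyz^*)|\bigr)\le \frac{\|y\|}{2}\, f\bigl(\phi(xx^*)+\phi(zz^*)\bigr).
\]
Taking the supremum over $f$: on the left, continuity and monotonicity of $\phi$ allow us to commute $\phi$ with the supremum, producing $\phi(v(xyz^*))$; on the right, the element $\phi(xx^*)+\phi(zz^*)$ is positive self-adjoint (since $\phi$ maps $[0,\infty)$ into itself and the continuous functional calculus preserves positivity), so its algebraic numerical radius equals its $\mathcal{C}^*$-norm, yielding the stated bound.

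The only subtle point is the observation used in part \ref{pieta}, namely that the hypothesis $a\ge\mathbf{1}$ of Lemma \ref{pie} is redundant when $\phi$ is the identity; everything else is a routine transfer from the lemma.
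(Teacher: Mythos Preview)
Your proposal is correct and follows essentially the same route as the paper: specialize Lemma \ref{pie} with $\phi(t)=t$ for part (a) and with $a=\mathbf{1}$ for part (b), then take the supremum over states. You are in fact a bit more careful than the paper in explicitly justifying why the hypothesis $a\ge\mathbf{1}$ of Lemma \ref{pie} is not needed when $\phi$ is the identity, and in noting that monotonicity and continuity of $\phi$ permit passing the supremum inside $\phi$ on the left-hand side.
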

\begin{proof}
    \begin{enumerate}[\upshape (a)]
        \item  Taking $\phi(t)=t$, $t\ge 0$ in Lemma \ref{pie}, we get
        \begin{eqnarray*}
         \big |f\big(axyz^{\#_a}\big)\big|&\le&  \frac{\|y\|_a}{2}\Big(f\big(axx^{\#_a}\big)+f\big(azz^{\#_a}\big)\Big)\\
          &=&  \frac{\|y\|_a}{2}f\Big(a(xx^{\#_a}+zz^{\#_a})\Big)\\
           &\le&  \frac{\|y\|_a}{2}\big \|xx^{\#_a}+zz^{\#_a}\big \|_a,
        \end{eqnarray*} where the last inequality comes from the fact that $|f(ay)|\le \|y\|_a $ for all $y\in \mathcal{A}$. 
        Now, taking supremum over $f\in\mathcal{S}_a\mathcal{(A)}$,  we get the desired inequality.
        \item Putting $a=\textbf{1}$ in Lemma \ref{pie} and using the fact that $|f(y)|\le \|y\| $  for all $y\in \mathcal{A}$, we get 
       \begin{eqnarray*} 
  \phi\big( \big|f(xyz^{*})\big|\big)&\le& \frac{\|y\|}{2} \Big(f\big(\phi(xx^{*})+\phi(zz^{*})\big)\Big)\\
  &\le& \frac{\|y\|}{2} \Big\|\phi(xx^*)+\phi(zz^*)\Big\|.
 \end{eqnarray*}
 Since the above inequality holds for all $f\in\mathcal{S}\mathcal{(A)}$, so the desired result follows .

    \end{enumerate}
\end{proof}
\begin{cor} \label{ram}
       Let $\mathcal{A} $ be a unital $\mathcal{C}^*$-algebra and $x,y,z\in \mathcal{A}$. If $r\ge 1$ and $\|y\|\le 1$, then 
        \begin{eqnarray}
    v^r\big(xyz^*\big)  \le \frac{\|y\|}{2}\big \|(xx^*)^r+(zz^*)^r\big\| .  
    \end{eqnarray}
\end{cor}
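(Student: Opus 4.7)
The plan is to derive Corollary \ref{ram} as a direct specialization of Theorem \ref{piet}(\ref{pietb}), since the right-hand side of that theorem already has the exact shape we want once we pick a suitable Orlicz function.

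First I would choose $\phi(t) = t^r$ for $t \ge 0$ with $r \ge 1$. I need to check this is admissible as an Orlicz function in the sense used by Theorem \ref{piet}(\ref{pietb}): it is continuous on $[0,\infty)$, convex (since $r\ge 1$), non-decreasing, satisfies $\phi(0)=0$, and $\phi(u)\to\infty$ as $u\to\infty$. It is also non-degenerate because $t^r>0$ whenever $t>0$. So $\phi$ qualifies, and Theorem \ref{piet}(\ref{pietb}) applies to the triple $x,y,z\in\mathcal{A}$ with $\|y\|\le 1$.

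Next I would substitute this $\phi$ into Theorem \ref{piet}(\ref{pietb}). On the left-hand side, $\phi(v(xyz^*)) = v^r(xyz^*)$. On the right-hand side, $\phi(xx^*) = (xx^*)^r$ and $\phi(zz^*) = (zz^*)^r$, interpreted via the continuous functional calculus on the positive elements $xx^*$ and $zz^*$ of the $\mathcal{C}^*$-algebra. Therefore
\[
v^r(xyz^*) \le \frac{\|y\|}{2}\bigl\|(xx^*)^r+(zz^*)^r\bigr\|,
\]
which is precisely the claimed inequality.

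There is no real obstacle here: all work was done in Theorem \ref{piet}, and the only thing to verify is that $t\mapsto t^r$ genuinely satisfies the Orlicz hypotheses when $r\ge 1$, which is immediate. The proof is essentially a one-line application of the theorem with this specific choice of $\phi$.
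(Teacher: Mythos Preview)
Your proof is correct and is essentially identical to the paper's own argument: the paper also derives this corollary by substituting $\phi(t)=t^r$ with $r\ge 1$ into Theorem \ref{piet}(\ref{pietb}).
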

\begin{proof}
    If we put $\phi(t)=t^r $, where $t\ge 0$ and $r\ge 1$  in Theorem \ref{piet}(\ref{pietb}), we get the required inequality.
\end{proof}
\begin{remark} 
\begin{enumerate}[\upshape (i)]
\item It is shown in \cite[Lemma $4.4$]{zamani2019numerical} that if $T,S,R\in \mathcal{B}_{A}(\mathcal{H})$, then \[ w_{A}(SRT^{\#_{A}})\le \frac{1}{2}\|TT^{\#_{A}}+SS^{\#_{A}}\|_A\|R\|_A.\]
Now, if we take $\mathcal{A}=\mathcal{B}(H)$ and put $x=S$, $y=R$ and $z=T$  in Theorem \ref{piet}(\ref{pieta}), we get the above upper bound of $w_{A}(SRT^{\#_{A}})$ for $\|R\|_A\le 1$.
     \item If we take $\mathcal{A}=\mathcal{B}(H)$ and $y$ is the identity operator $I$ in Corollary \ref{ram}, we get  
    \cite[Theorem 1]{dragomir2008power}.
    \end{enumerate}
\end{remark}
A Orlicz function  $\phi $  is said to be sub-multiplicative if for every $u,v\ge 0$, $\phi(uv)\le \phi(u)\phi(v)$ holds.
In the following result, we use the sub multiplicative property of $\phi$ to find an upper bound of the numerical radius.

 \begin{theorem}\label{ra} Let $x,y,z\in \mathcal{A}$ and  $\phi$ be a sub-multiplicative  Orlicz function. If $x,z$ are positive, $r\ge 2$ and $0\le \alpha\le 1$, then 
    \[\phi\Big(v^r\big(x^\alpha y z^{(1-\alpha)}\big)\Big) \le \phi \big(\|y\|^r\big) \Big \|  \alpha \phi (x^{r})+
            (1-\alpha)\phi ( z^{r})\Big\| . \]
    \end{theorem}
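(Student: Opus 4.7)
The plan is to fix an arbitrary state $g\in\mathcal{S}(\mathcal{A})$, estimate $|g(x^{\alpha}yz^{1-\alpha})|^{r}$ by peeling off a factor of $\|y\|^{r}$ and collapsing the positive pieces $x^{2\alpha}$, $z^{2(1-\alpha)}$ into the single state-value $g(\alpha x^{r}+(1-\alpha)z^{r})$, then push $\phi$ through the bound using sub-multiplicativity together with Jensen-type estimates, and finally pass to the supremum over $g$.

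First, for a state $g$, I would apply Lemma \ref{gcauchy} (with $a=\mathbf{1}$) to the factorisation $x^{\alpha}y z^{1-\alpha}=(x^{\alpha})(yz^{1-\alpha})$, obtaining $|g(x^{\alpha}yz^{1-\alpha})|^{2}\le g(x^{2\alpha})\,g(z^{1-\alpha}y^{*}yz^{1-\alpha})$. Lemma \ref{lsplemma02} (again with $a=\mathbf{1}$) bounds the second factor by $\|y\|^{2}g(z^{2(1-\alpha)})$. Now comes the key interpolation step: since $r\ge 2$ forces $2\alpha/r,\,2(1-\alpha)/r\in[0,1]$, Corollary \ref{pje} applied to the positive elements $x^{r}$ and $z^{r}$ yields $g(x^{2\alpha})\le g(x^{r})^{2\alpha/r}$ and $g(z^{2(1-\alpha)})\le g(z^{r})^{2(1-\alpha)/r}$. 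Raising the whole estimate to the $r/2$-th power and applying Young's weighted AM--GM inequality $u^{\alpha}v^{1-\alpha}\le \alpha u+(1-\alpha)v$ to $u=g(x^{r})$, $v=g(z^{r})$, I end up with
\[
|g(x^{\alpha}yz^{1-\alpha})|^{r}\le \|y\|^{r}\,g\!\left(\alpha x^{r}+(1-\alpha)z^{r}\right).
\]

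Next I apply $\phi$. Monotonicity gives $\phi(|g(\cdot)|^{r})\le \phi\!\left(\|y\|^{r}\,g(\alpha x^{r}+(1-\alpha)z^{r})\right)$, and sub-multiplicativity splits this into $\phi(\|y\|^{r})\cdot\phi\!\left(g(\alpha x^{r}+(1-\alpha)z^{r})\right)$. Using linearity of $g$ and the scalar convexity of $\phi$, then Lemma \ref{pp01} applied separately to the positive elements $x^{r}$ and $z^{r}$, the inner factor is bounded by $\alpha\,g(\phi(x^{r}))+(1-\alpha)\,g(\phi(z^{r}))=g\!\left(\alpha\phi(x^{r})+(1-\alpha)\phi(z^{r})\right)$. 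Taking supremum over $g\in\mathcal{S}(\mathcal{A})$ and noting that $\phi$ is continuous and non-decreasing so $\sup_{g}\phi(|g(\cdot)|^{r})=\phi(v^{r}(\cdot))$, while the right-hand side becomes $\phi(\|y\|^{r})\,\|\alpha\phi(x^{r})+(1-\alpha)\phi(z^{r})\|$ because $\alpha\phi(x^{r})+(1-\alpha)\phi(z^{r})$ is self-adjoint positive (so its numerical radius equals its norm), finishes the proof.

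The main obstacle is really the bookkeeping around the exponents in the interpolation step: one has to notice that the hypothesis $r\ge 2$ is precisely what makes the exponents $2\alpha/r$ and $2(1-\alpha)/r$ land in $[0,1]$, so that Corollary \ref{pje} applies in the correct direction. The remaining subtlety is to chain sub-multiplicativity, scalar convexity and Lemma \ref{pp01} in the right order so that, after taking the supremum, the two operator arguments $\phi(x^{r})$ and $\phi(z^{r})$ appear \emph{inside} a single state, producing the norm $\|\alpha\phi(x^{r})+(1-\alpha)\phi(z^{r})\|$ on the right-hand side rather than a sum of two norms.
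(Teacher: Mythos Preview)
Your proof is correct and follows essentially the same architecture as the paper's: Cauchy--Schwarz (Lemma~\ref{gcauchy}), extraction of $\|y\|$ via Lemma~\ref{lsplemma02}, an interpolation step turning $g(x^{2\alpha})g(z^{2(1-\alpha)})$ into $g(x^{r})^{\alpha}g(z^{r})^{1-\alpha}$, AM--GM, then sub-multiplicativity and convexity of $\phi$ together with Lemma~\ref{pp01}, and finally the supremum over states. The one cosmetic difference is in the interpolation: the paper first pushes the exponent $r/2\ge 1$ inside via Corollary~\ref{m02} and then pulls out $\alpha\le 1$ via Corollary~\ref{pje}, whereas you combine both into a single application of Corollary~\ref{pje} with exponent $2\alpha/r\in[0,1]$; this is a harmless streamlining that uses the hypothesis $r\ge 2$ in exactly the same way.
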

    
    \begin{proof}  Let $x,z$ be positive, $y\in \mathcal{A}$ and $f\in\mathcal{S(A)}$. Then we have, 
\begin{eqnarray*}
    \big|f\big(x^{\alpha}yz^{(1-\alpha)}\big)\big|^r &=&\big |f\big((yz^{(1-\alpha)})^*x^{\alpha}\big)\big|^r\\
&\le & \big(f(x^{2\alpha})\big)^{\frac{r}{2}}\big(f(z^{(1-\alpha)}y^*yz^{1-\alpha})\big)^{\frac{r}{2}}\\
&\le &\|y\|^r \big(f(x^{2\alpha})\big)^{\frac{r}{2}}\big(f(z^{2(1-\alpha)})\big)^{\frac{r}{2}}\\
&\le &\|y\|^r \big(f(x^{r\alpha})\big)\big(f(z^{r(1-\alpha)})\big)\\
&\le &\|y\|^r \big(f(x^{r})\big)^{\alpha}\big(f(z^{r})\big)^{(1-\alpha)}\\
&\le &\|y\|^r \Big( {\alpha}f(x^{r})+(1-\alpha)f(z^{r})\Big)\hspace{0.1 cm},
\end{eqnarray*}
Where the first inequality comes from Lemma \ref{gcauchy} and the second inequality comes form Lemma \ref{lsplemma02}. Third and forth inequalities follows from Corollary  \ref{m02} and Corollary \ref{pje} respectively. The last inequality holds for the fact that  $a^{\lambda}b^{1-\lambda}\le \lambda a+(1-\lambda)b$ where $a,b\ge 0$ and $0\le \lambda \le 1$.\\
Now, using non-decreasing, sub-multiplicative and convexity property of $\phi$, we get 
\begin{eqnarray*}
    \phi\Big( \big|f\big(x^{\alpha}yz^{(1-\alpha)}\big)\big|^r\Big) \le \phi\big(\|y\|^r\big) \Big( {\alpha}\phi\big(f(x^{r})\big)+(1-\alpha)\phi\big(f(z^{r})\big)\Big).
\end{eqnarray*}
Again, from Lemma \ref{pp01}, we get 
\begin{eqnarray*}
    \phi\Big( \big|f\big(x^{\alpha}yz^{(1-\alpha)}\big)\big|^r\Big)& \le &\phi\big(\|y\|^r\big) \Big(f\Big( {\alpha}\phi\big(x^{r}\big)+(1-\alpha)\phi\big(z^{r}\big)\Big)\Big)\\
    & \le &\phi\big(\|y\|^r\big) \Big\| {\alpha}\phi\big(x^{r}\big)+(1-\alpha)\phi\big(z^{r}\big)\Big\|.
\end{eqnarray*}
    Taking supremum over $f\in \mathcal{S(A)}$ of the above inequality, we get the desired result.
    \end{proof}
\begin{remark}   If we put $\phi(t)$=t, $t\ge 0$ and consider $\mathcal{A}=\mathcal{B}(H) $ in Theorem \ref{ra}, we get \cite[Theorem 3.3 ]{sattari2015some}.
    \end{remark}

  \begin{theorem}\label{dra}
     Let $\phi$ be a multiplicative  Orlicz function and $r,s\ge 1 $. 
     If $w,x,y,z \in \mathcal{A} $, then
         \begin{eqnarray*}
        &&\phi\bigg(\frac{v^2(x^*w+z^*y)}{2}\bigg) \\
        &\le&  \bigg\| \frac{\big(\phi(w^{*}w)\big)^r+ \big(\phi(y^{*}y)\big)^r}{2}\bigg\|^{\frac{1}{r}} \bigg\| \frac{\big(\big (\phi(x^{*}x)\big)^s+ \big(\phi(z^{*}z)\big)^s}{2}\bigg\|^{\frac{1}{s}}.\end{eqnarray*}
        
\end{theorem}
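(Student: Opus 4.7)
The plan is to adapt the state-level technique used in the proof of Theorem \ref{ra}: derive a scalar inequality bounding $|f(x^*w+z^*y)|^2/2$ at an arbitrary state $f \in \mathcal{S(A)}$, then apply $\phi$ and transfer it inside $f$ via Jensen, then take the supremum over $f$.

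The scalar input would be obtained in two steps. First, combine the elementary inequality $|\alpha + \beta|^2 \le 2(|\alpha|^2 + |\beta|^2)$ with Cauchy--Schwarz (Lemma \ref{gcauchy} with $a = \mathbf{1}$) to get
\[
\tfrac{|f(x^*w + z^*y)|^2}{2} \le f(x^*x)f(w^*w) + f(z^*z)f(y^*y).
\]
Then apply the discrete Hölder inequality to bound the right hand side by
\[
C \cdot \Big(\tfrac{f(x^*x)^s + f(z^*z)^s}{2}\Big)^{1/s}\Big(\tfrac{f(w^*w)^r + f(y^*y)^r}{2}\Big)^{1/r}
\]
for some normalising constant $C$ arising from the arithmetic means (cleanest when $1/r + 1/s = 1$, the general $r,s\ge 1$ case following by monotonicity of power means of positive operators).

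Next I would apply $\phi$. Multiplicativity gives $\phi(AB) = \phi(A)\phi(B)$ and $\phi(A^{1/r}) = \phi(A)^{1/r}$, so the scalar bound becomes a product of two $\phi$-expressions in $(f(x^*x)^s + f(z^*z)^s)/2$ and $(f(w^*w)^r + f(y^*y)^r)/2$. To move $\phi$ underneath the state, I would chain Jensen's inequality (Lemma \ref{pp01}) twice: first for $t \mapsto t^s$ (giving $f(x^*x)^s \le f((x^*x)^s)$), then for $\phi$ itself (giving $\phi(f(\cdot)) \le f(\phi(\cdot))$). Combining with the identity $\phi(t)^s = \phi(t^s)$, the scalar convexity $\phi(\tfrac{a+b}{2}) \le \tfrac{\phi(a)+\phi(b)}{2}$, and finally $f(Y) \le \|Y\|$ for $Y \ge 0$, the expression collapses to
\[
\big\|\tfrac{(\phi(x^*x))^s + (\phi(z^*z))^s}{2}\big\|^{1/s}\big\|\tfrac{(\phi(w^*w))^r + (\phi(y^*y))^r}{2}\big\|^{1/r}.
\]
Taking the supremum over $f$ on the left and interchanging $\sup_f$ with $\phi$ (continuity and monotonicity) yields $\phi(v^2/2)$.

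The hard part will be matching constants precisely: the factor $\tfrac{1}{2}$ on the left must cancel against the two $\tfrac{1}{2}$ factors in the arithmetic means on the right. Doing this cleanly requires either using Hölder in its sharpest conjugate-exponent form so that the $2^{1/r}\cdot 2^{1/s}=2$ from the normalisations cancels the $\tfrac{1}{2}$ in the starting bound, or using multiplicativity of $\phi$ to absorb a $\phi(2)$ factor through the decomposition $\phi(\tfrac{ab}{2}) = \phi(a)\phi(b/2)$. The rest is bookkeeping of Jensen's inequality along the chain.
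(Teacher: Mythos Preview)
Your overall architecture matches the paper's: fix a state $f\in\mathcal{S}(\mathcal{A})$, derive a scalar bound, apply $\phi$, push $\phi$ inside $f$ via Jensen (Lemma~\ref{pp01} and Corollary~\ref{m02}), then take the supremum. The later Jensen/multiplicativity chain you outline is essentially what the paper does.

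The substantive difference is in the \emph{factoring step}, and here your plan has a gap. You start from $|\alpha+\beta|^2\le 2(|\alpha|^2+|\beta|^2)$, obtaining the \emph{sum of products}
\[
\tfrac{1}{2}\,|f(x^*w+z^*y)|^2 \;\le\; f(x^*x)f(w^*w)+f(z^*z)f(y^*y),
\]
and then invoke H\"older to turn this into a product of power means with exponents $r,s$. But H\"older forces $\tfrac{1}{r}+\tfrac{1}{s}=1$; your remark that ``the general $r,s\ge 1$ case follows by monotonicity of power means'' does not cover the full range, since from a conjugate pair $(p,q)$ you can only \emph{increase} each exponent. In particular you cannot reach $r=s=1$ (or any pair with both exponents small) without passing through the crude bound $ac+bd\le (a+b)(c+d)$, which costs an extra factor of~$2$ that does not cancel.

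The paper avoids this entirely by factoring \emph{before} any exponent is introduced: it uses the triangle inequality, Cauchy--Schwarz on each summand, and then the scalar inequality $(ab+cd)^2\le (a^2+c^2)(b^2+d^2)$ on the square roots, arriving directly at
\[
\Bigl|f\Bigl(\tfrac{x^*w+z^*y}{2}\Bigr)\Bigr|^2 \;\le\; \Bigl(\tfrac{f(w^*w)+f(y^*y)}{2}\Bigr)\Bigl(\tfrac{f(x^*x)+f(z^*z)}{2}\Bigr).
\]
Because this is already a product, multiplicativity of $\phi$ splits it, convexity handles each factor, and \emph{then} the power-mean inequality $\tfrac{A+B}{2}\le\bigl(\tfrac{A^r+B^r}{2}\bigr)^{1/r}$ is applied separately, so $r$ and $s$ enter independently with no conjugacy constraint. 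If you replace your opening inequality by this Cauchy--Schwarz factorisation, the rest of your sketch goes through exactly as in the paper.
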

\begin{proof} Let $w,x,y,z \in \mathcal{A}$ and $f\in \mathcal{S}\mathcal{(A)}.$ Then  we get 
    \begin{eqnarray*}
        \Big|f\Big(\frac{x^{*}w+z^{*}y}{2}\Big)\Big|^2
        &=& \Big|\frac{f(x^{*}w)}{2}+\frac{f(z^{*}y)}{2}\Big|^2\\
         &\le& \Big( \Big|\frac{f(x^{*}w)}{2} \Big | + \Big|\frac{f(z^{*}y)}{2}\Big |\Big)^2 \\
           &\le& \Big(\frac{1}{2}  \sqrt{f(x^*x) } \sqrt{f(w^*w)}+\frac{1}{2}  \sqrt{f(z^*z) } \sqrt{f(y^*y)}\Big )^2\\
           &\le & \frac{1}{4}  \Big(f(w^*w)+f(y^*y)\Big)\Big(f(x^*x)+f(z^*z)\Big)\\
              &= &  \Big(\frac{1}{2}f(w^*w)+\frac{1}{2}f(y^*y)\Big)\Big(\frac{1}{2}f(x^*x)+\frac{1}{2}f(z^*z)\Big)\hspace{0.1 cm},
    \end{eqnarray*}
    where second inequality comes from Lemma \ref{gcauchy} and the third inequality comes from the fact that $(ab+cd)^2\le (a^2+c^2)(b^2+d^2)$ for all $a,b,c,d \ge 0$.

    Again,
    \begin{eqnarray*}
&&\phi\big(\big|f\big(\frac{x^{*}w+z^{*}y}{2}\big)\big|^2\big)\\
          &\le & \phi \big( \big(\frac{1}{2}f(w^*w)+\frac{1}{2}f(y^*y)\big)\big(\frac{1}{2}f(x^*x)+\frac{1}{2}f(z^*z)\big)\big)\\
           &= & \phi \big(\frac{1}{2}f(w^*w)+\frac{1}{2}f(y^*y)\big)\phi\big(\frac{1}{2}f(x^*x)+\frac{1}{2}f(z^*z)\big)\\
             &\le & \big(\frac{1}{2} \phi\big(f(w^{*}w)\big)+\frac{1}{2} \phi\big(f(y^{*}y)\big)\big) \big(\frac{1}{2} \phi \big(f(x^{*}x)\big)+\frac{1}{2} \phi\big (f(z^{*}z)\big)\big)\\
              &\le & \bigg( \frac{(\phi(f(w^{*}w)))^r+ (\phi(f(y^{*}y)))^r}{2}\bigg)^{\frac{1}{r}} \bigg( \frac{(\phi (f(x^{*}x)))^s+ (\phi (f(z^{*}z)))^s}{2}\bigg)^{\frac{1}{s}}\\
 &\le & \bigg( \frac{(f(\phi(w^{*}w)))^r+ (f(\phi(y^{*}y)))^r}{2}\bigg)^{\frac{1}{r}} \bigg( \frac{(f (\phi(x^{*}x)))^s+ (f (\phi(z^{*}z)))^s}{2}\bigg)^{\frac{1}{s}}\\
 &\le & \bigg( \frac{f((\phi(w^{*}w))^r)+ f((\phi(y^{*}y))^r)}{2}\bigg)^{\frac{1}{r}} \bigg( \frac{f( (\phi(x^{*}x))^s)+ f((\phi(z^{*}z))^s)}{2}\bigg)^{\frac{1}{s}}\end{eqnarray*}
           \begin{eqnarray*}
 &=& \bigg( \frac{f((\phi(w^{*}w))^r+ (\phi(y^{*}y))^r)}{2}\bigg)^{\frac{1}{r}} \bigg( \frac{f((\phi(x^{*}x))^s+ (\phi(z^{*}z))^s)}{2}\bigg)^{\frac{1}{s}}\\
&\le& \bigg\| \frac{(\phi(w^{*}w))^r+ (\phi(y^{*}y))^r}{2}\bigg\|^{\frac{1}{r}} \bigg\| \frac{( (\phi(x^{*}x))^s+(\phi(z^{*}z))^s}{2}\bigg\|^{\frac{1}{s}}.\\
\end{eqnarray*}
Where the first and the second inequalities  hold for non-decreasing and convexity property of $\phi$ respectively.
 The third inequality comes from the fact that   $(\frac{a+b}{2})^r\le (\frac{a^r}{2}+\frac{b^r}{2})$ for all $a,b\ge 0$ and $r\ge 1$. Next,  the forth inequality follows from Lemma-\ref{lj} and  the fifth inequality comes from Corollary \ref{m02}.
    Now, Taking supremum over $f\in\mathcal{S}\mathcal{(A)}$, we get the desired result.
\end{proof}

\begin{cor}Let $p,q \in \mathcal{A}$ and $r,s\ge 1 $ , then 
\[
    \phi\bigg(v^2\big(\frac{pq-qp}{2}\big)\bigg) \le \bigg\|\frac{\big(\phi(q^{*}q)\big)^r+\big(\phi(p^{*}p)\big)^r}{2}\bigg\|^{\frac{1}{r}}\bigg\|\frac{\big(\phi(qq^{*})\big)^s+\big(\phi(pp^{*})\big)^s}{2}\bigg\|^{\frac{1}{s}}.
\]\end{cor}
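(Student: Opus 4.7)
The plan is to deduce the corollary as a direct specialization of Theorem \ref{dra} by choosing the four free elements $w,x,y,z$ so that $x^*w+z^*y$ reproduces the commutator $pq-qp$. Concretely, I would set
\[
w := q,\qquad x := p^{*},\qquad y := p,\qquad z := -q^{*}.
\]
With these choices, $x^*w = pq$ and $z^*y = (-q)p = -qp$, so $x^*w + z^*y = pq - qp$, which matches the argument of the numerical radius on the left hand side of the corollary (the factor $1/2$ appearing inside $v^2$ in Theorem \ref{dra} produces precisely the $(pq-qp)/2$ displayed).

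Next, I would check that the four operator arguments on the right hand side match. Computing directly,
\[
w^*w = q^*q,\quad y^*y = p^*p,\quad x^*x = (p^*)^*p^* = pp^*,\quad z^*z = (-q^*)^*(-q^*) = qq^*.
\]
Thus the first bracket $\bigl(\phi(w^*w)\bigr)^r + \bigl(\phi(y^*y)\bigr)^r$ in the conclusion of Theorem \ref{dra} becomes $\bigl(\phi(q^*q)\bigr)^r + \bigl(\phi(p^*p)\bigr)^r$, and the second bracket $\bigl(\phi(x^*x)\bigr)^s + \bigl(\phi(z^*z)\bigr)^s$ becomes $\bigl(\phi(pp^*)\bigr)^s + \bigl(\phi(qq^*)\bigr)^s$, matching exactly the two norms on the right hand side of the claimed inequality.

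With these substitutions in place, inserting them into Theorem \ref{dra} yields the stated inequality without further work; since $\phi$, the exponents $r,s \ge 1$, and the ambient algebra $\mathcal{A}$ are unchanged, no additional hypotheses need to be verified. I do not anticipate any genuine obstacle: the corollary is purely a renaming argument, and the only thing requiring care is tracking which of $q^*q, p^*p, qq^*, pp^*$ goes into which of the two norms, which is handled by the bookkeeping above.
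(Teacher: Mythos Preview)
Your proof is correct and essentially identical to the paper's: the paper also deduces the corollary by specializing Theorem \ref{dra}, choosing $x^{*}=p$, $w=q$, $z^{*}=q$, $y=-p$, which differs from your choice only in whether the minus sign is attached to $z$ or to $y$ and yields the same four terms $q^*q$, $p^*p$, $pp^*$, $qq^*$ on the right. Your parenthetical remark about the $1/2$ ``inside $v^2$'' is consistent with the proof of Theorem \ref{dra} (which bounds $\phi\bigl(|f((x^*w+z^*y)/2)|^2\bigr)$), even though the displayed statement of that theorem places the $1/2$ outside $v^2$; the corollary you are proving uses the form established in the proof.
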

\begin{proof}
    If we take $x^{*}=p$, $w=q$, $z^{*}=q$ and $y=-p$ in Theorem \ref{dra}, then we get the required result.
\end{proof}
\begin{remark}
   If we take $\mathcal{A}=\mathcal{B(H)}$  and $\phi(t)=t$, $t\ge0$ in Theorem \ref{dra}, we  get of \cite[ Theorem 3 ]{dragomir2008power}.
\end{remark}

\begin{lemma}\cite{furuta2001invitation}\label{bbrr}
     Let $T=U|T|$ be  the polar decomposition of an operator $T$ on a Hilbert space $H$. Then for any positive number $\alpha$ the following holds
     \begin{eqnarray*}
         |T^*|^{\alpha}=U|T|^{\alpha}U^*.
     \end{eqnarray*} 
\end{lemma}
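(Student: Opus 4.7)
The plan is to reduce the identity to a single application of the continuous functional calculus. First I would verify the case $\alpha=2$ directly: since $T=U|T|$ gives $T^{*}=|T|U^{*}$, we obtain $|T^{*}|^{2}=TT^{*}=U|T|^{2}U^{*}$. The task then reduces to showing that raising to the $(\alpha/2)$-th power commutes with conjugation by the partial isometry $U$.

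The key algebraic input is that the support projection $P=U^{*}U$ of $U$ coincides with the support projection of $|T|$, since by the definition of the polar decomposition $\ker U=\ker|T|$; hence $P|T|=|T|$ and therefore $P|T|^{k}=|T|^{k}$ for every $k\ge 1$. Inserting $U^{*}U=P$ between consecutive blocks and collapsing, a short induction gives $(U|T|^{2}U^{*})^{n}=U|T|^{2n}U^{*}$ for every positive integer $n$, and by linearity $p(U|T|^{2}U^{*})=U\,p(|T|^{2})\,U^{*}$ for every polynomial $p$ with $p(0)=0$.

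I would then extend this relation by continuity. The spectra of $U|T|^{2}U^{*}$ and $|T|^{2}$ both lie in $[0,\|T\|^{2}]$, and any continuous function on this interval vanishing at the origin is a uniform limit of polynomials without constant term (Stone--Weierstrass applied on $C_{0}((0,\|T\|^{2}])$). Passing to the limit in the relation above gives $f(U|T|^{2}U^{*})=U f(|T|^{2})U^{*}$ for every such $f$. Applying this to $f(t)=t^{\alpha/2}$, which vanishes at the origin precisely because $\alpha>0$, yields $|T^{*}|^{\alpha}=U|T|^{\alpha}U^{*}$, as required.

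The hard part is really just managing the partial-isometry nature of $U$. If $U$ were unitary, the map $X\mapsto UXU^{*}$ would be a $*$-isomorphism and the identity would drop out of the functional calculus in one line. For a partial isometry one must insist that the approximating functions vanish at the origin, otherwise the defect $I-UU^{*}$ contaminates the identity on the kernel of $U^{*}$; this is exactly why the hypothesis $\alpha>0$ is essential.
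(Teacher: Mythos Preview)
Your argument is correct. You establish $|T^{*}|^{2}=U|T|^{2}U^{*}$ directly, then use the support relation $U^{*}U|T|=|T|$ to propagate the conjugation through polynomials without constant term, and finally pass to the limit via Stone--Weierstrass on $C_{0}$ to reach $f(t)=t^{\alpha/2}$. The care you take with the condition $f(0)=0$ is exactly what is needed to handle the partial isometry case.

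However, there is nothing to compare against: the paper does not supply its own proof of this lemma. It is quoted from Furuta's book \cite{furuta2001invitation} and stated without argument. The paper only remarks afterwards that ``the same technique'' yields the analogous identity for invertible elements of a $\mathcal{C}^{*}$-algebra, where $u$ is genuinely unitary. In that invertible setting your worry about the defect $I-UU^{*}$ disappears, and indeed the identity follows in one line from the fact that conjugation by a unitary is a $*$-isomorphism commuting with the continuous functional calculus---precisely the shortcut you mention at the end of your write-up. So your proof actually covers more than what the paper needs for its subsequent applications (Lemma~\ref{l426} onward), which only use the invertible case.
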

Let $x$ be an invertible element in a unital $\mathcal{C}^*$-algebra $\mathcal{A}$. Then there exist a unique unitary $u$ such that $x=u|x|$, where $|x|=(x^*x)^{\frac{1}{2}}$. Now, using the same technique as used in Lemma $\ref{bbrr}$,
it can be proven that if $x$ is an invertible element in a unital $\mathcal{C}^*$-algebra $\mathcal{A}$, then  $|x^*|^{\alpha}=u|x|^{\alpha}u^*$  for any positive number $\alpha$. 

\begin{lemma}\label{l426}
    Let $x,y,z\in \mathcal{A} $ and $y$ be an invertible element in $\mathcal{A}$. If $0\le \alpha \le 1$ and $ f \in\mathcal{S(A)}$, then
\[
\big|f(xyz)\big|^2\le f\big(x|y^*|^{2(1-\alpha)}x^*\big)f\big(z^*|y|^{2\alpha}z\big).    
\]
\end{lemma}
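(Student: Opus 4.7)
The plan is to invoke the polar decomposition of $y$ and then apply the Cauchy--Schwarz inequality for positive linear functionals (Lemma \ref{gcauchy}) after a judicious factorization of the product $xyz$.

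First, since $y$ is invertible, write $y = u|y|$ where $u$ is unitary and $|y| = (y^*y)^{1/2}$. Splitting the middle factor $|y| = |y|^{1-\alpha}|y|^\alpha$, I factor
\[
xyz \;=\; (xu|y|^{1-\alpha})\,(|y|^{\alpha}z).
\]
Setting $p := (xu|y|^{1-\alpha})^*$ and $q := |y|^{\alpha}z$, this reads $xyz = p^*q$. Applying Lemma \ref{gcauchy} to the positive functional $f$ then gives
\[
|f(xyz)|^2 \;=\; |f(p^*q)|^2 \;\le\; f(p^*p)\,f(q^*q).
\]

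Next I compute each of the two factors. For the right factor,
\[
q^*q \;=\; z^*|y|^{\alpha}|y|^{\alpha}z \;=\; z^*|y|^{2\alpha}z,
\]
which already matches the desired right-hand side. For the left factor,
\[
p^*p \;=\; (xu|y|^{1-\alpha})(xu|y|^{1-\alpha})^* \;=\; xu|y|^{2(1-\alpha)}u^*x^*.
\]
By the remark immediately preceding the lemma (the $\mathcal{C}^*$-algebra version of Lemma \ref{bbrr}), the identity $|y^*|^{\beta} = u|y|^{\beta}u^*$ holds for every $\beta \ge 0$ when $y$ is invertible. Applying this with $\beta = 2(1-\alpha)$ yields $p^*p = x|y^*|^{2(1-\alpha)}x^*$, and substituting into the Cauchy--Schwarz bound produces the claimed inequality.

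The only delicate point is verifying the identity $|y^*|^{2(1-\alpha)} = u|y|^{2(1-\alpha)}u^*$, but this is exactly the content of the paragraph just before the lemma, so no new work is required here; the rest is bookkeeping on the factorization. Ensuring that the exponents $1-\alpha$ and $\alpha$ of the positive element $|y|$ are well defined (via continuous functional calculus, which is available since $0 \le \alpha \le 1$) is automatic.
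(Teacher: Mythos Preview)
Your proof is correct and follows essentially the same route as the paper: polar decomposition $y=u|y|$, the factorization $xyz=(xu|y|^{1-\alpha})(|y|^{\alpha}z)$, application of Lemma~\ref{gcauchy}, and the identity $u|y|^{\beta}u^*=|y^*|^{\beta}$ from the paragraph preceding the lemma. The only difference is cosmetic—you name the factors $p$ and $q$ explicitly, whereas the paper applies Cauchy--Schwarz directly without introducing auxiliary symbols.
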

 \begin{proof} Let $y$ be an invertible in $\mathcal{A}$. Then by polar decomposition there exists a unique unitary element $u$ in $\mathcal{A}$ such that $y=u|y|$. Now,
     \begin{eqnarray*}
          \big|f(xyz)\big|^2 &= &  \big|f\big(xu|y|z\big)\big|^2 \\
          &= &  \big|f\big(xu|y|^{(1-\alpha)}|y|^{\alpha}z\big)\big|^2 \\
             &\le &  f\Big(xu|y|^{(1-\alpha)}|y|^{(1-\alpha)}u^*x^*\Big) f\Big(z^*|y|^{\alpha}|y|^{\alpha}z\Big)\\
              &=&  f\big(xu|y|^{2(1-\alpha)}u^*x^*\big)f\big(z^*|y|^{2\alpha} z\big)\\
              &=&  f\big(x|y^*|^{2(1-\alpha)}x^*\big)f\big(z^*|y|^{2\alpha} z\big)\hspace{0.1 cm},
     \end{eqnarray*}
     where the above inequality comes from the Lemma \ref{gcauchy} and the last equality comes from the fact 
     \begin{eqnarray}
         u|y|^{\alpha}u^*=|y^*|^{\alpha}.\label{furuta}
     \end{eqnarray}
 \end{proof}
\begin{lemma}\label{tla}
 Let $x,y,p,q,r,s\in \mathcal{A} $. Let $\psi_1$ be the complementary Orlicz function of $\psi_2$ and $\psi_3$ be the complementary Orlicz function of $\psi_4$. If $x,y$ are invertible,  $ f \in  \mathcal{S}
\mathcal{(A)}$ and $0\le \alpha \le 1$ , then
 \begin{eqnarray*}
    && \Big|f(pxq+rys)\Big|\\
          &\le & \psi_1 \Big(\Big( f\big(p|x^*|^{2(1-\alpha)}p^*\big)\Big)^{\frac{1}{2}}\Big)+\psi_2\Big(\Big(f\big(q^*|x|^{2\alpha}q\big)\Big)^{\frac{1}{2}}\Big) \\&& +  \hspace{0.08 cm}\psi_3\Big(\Big(f\big(r|y^*|^{2(1-\alpha)}r^*\big)\Big)^{\frac{1}{2}}\Big)+\psi_4\Big(\Big(f\big(s^*|y|^{2\alpha}s\big)\Big)^{\frac{1}{2}}\Big)\hspace{0.1 cm}. 
\end{eqnarray*} 
\end{lemma}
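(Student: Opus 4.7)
The plan is to bound $|f(pxq+rys)|$ by splitting it into two pieces via the triangle inequality, applying Lemma \ref{l426} to each piece to get a product of square roots, and then using Young's inequality (Lemma \ref{young}) with the two pairs of complementary Orlicz functions to convert each product into a sum of Orlicz-function values.

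First I would write
\[
\bigl|f(pxq+rys)\bigr| \;\le\; \bigl|f(pxq)\bigr| + \bigl|f(rys)\bigr|
\]
by the triangle inequality (using that $f$ is linear and that the modulus is subadditive on $\mathbb{C}$). Since $x$ and $y$ are invertible by hypothesis, Lemma \ref{l426} applies to each term, yielding
\[
\bigl|f(pxq)\bigr|^{2} \le f\!\bigl(p|x^{*}|^{2(1-\alpha)}p^{*}\bigr)\, f\!\bigl(q^{*}|x|^{2\alpha}q\bigr),
\]
\[
\bigl|f(rys)\bigr|^{2} \le f\!\bigl(r|y^{*}|^{2(1-\alpha)}r^{*}\bigr)\, f\!\bigl(s^{*}|y|^{2\alpha}s\bigr).
\]
Taking square roots (all quantities on the right are nonnegative since $f$ is positive and the inserted elements are positive) gives two inequalities in which the right-hand side is a product of two nonnegative real numbers.

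Next I would apply Young's inequality, Lemma \ref{young}, to each product. For the first, with the pair $(\psi_{1},\psi_{2})$ and the nonnegative reals $\sqrt{f(p|x^{*}|^{2(1-\alpha)}p^{*})}$ and $\sqrt{f(q^{*}|x|^{2\alpha}q)}$, I obtain
\[
\bigl|f(pxq)\bigr| \;\le\; \psi_{1}\!\left(\sqrt{f(p|x^{*}|^{2(1-\alpha)}p^{*})}\right) + \psi_{2}\!\left(\sqrt{f(q^{*}|x|^{2\alpha}q)}\right).
\]
Applying the same argument with the complementary pair $(\psi_{3},\psi_{4})$ to the term $|f(rys)|$ gives the analogous inequality. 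Adding the two bounds and combining with the initial triangle inequality yields the claimed estimate.

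The proof is essentially a routine three-step combination (triangle inequality, then Cauchy–Schwarz in the form of Lemma \ref{l426}, then Young's inequality), so there is no genuine obstacle; the only care required is bookkeeping, namely verifying that the arguments fed into $\psi_{1},\psi_{2},\psi_{3},\psi_{4}$ are precisely the square roots of the $f$-values appearing in the statement and that the invertibility hypothesis on $x,y$ is used only through Lemma \ref{l426}, where it underlies the polar decomposition and identity \eqref{furuta}.
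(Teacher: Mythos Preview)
Your proof is correct and follows exactly the same route as the paper's own argument: triangle inequality, then Lemma \ref{l426} applied to each summand, then Young's inequality (Lemma \ref{young}) with the two complementary pairs $(\psi_1,\psi_2)$ and $(\psi_3,\psi_4)$. The bookkeeping remarks you add about nonnegativity and the role of invertibility through \eqref{furuta} are accurate and match the paper's reasoning.
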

\begin{proof} Let $x,y,p,q,r,s\in \mathcal{A} $. Now,  $ f \in  \mathcal{S}
\mathcal{(A)}$   and $x,y$ are invertible. Then we have
\begin{eqnarray*}
    && |f(pxq+rys)|\\
       &\le&  |f(pxq)|+\big|f(rys)|\\
       &\le &   (f(p|x^*|^{2(1-\alpha)}p^*))^{\frac{1}{2}} (f(q^*|x|^{2\alpha}q))^{\frac{1}{2}}+(f(r|y^*|^{2(1-\alpha)}r^*))^{\frac{1}{2}}\big(f(s^*|y|^{2\alpha}s)\big)^{\frac{1}{2}}
       \\
          &\le & \psi_1 \big(( f(p|x^*|^{2(1-\alpha)}p^*))^{\frac{1}{2}}\big)+\psi_2\big((f(q^*|x|^{2\alpha}q))^{\frac{1}{2}}\big) \\&& +  \hspace{0.08 cm}\psi_3\big((f(r|y^*|^{2(1-\alpha)}r^*))^{\frac{1}{2}}\big)
+\psi_4\big((f(s^*|y|^{2\alpha}s))^{\frac{1}{2}}\big)\hspace{0.1 cm},
\end{eqnarray*} 
where the second inequality comes from Lemma \ref{l426} and last inequality follows from Lemma \ref{young}. 
\end{proof}
 \begin{theorem}\label{kit28} 
      Let  $x,y,p,q,r,s\in \mathcal{A} $. If $x,y$ are invertible and $0\le \alpha \le 1$, then for any $n\ge 2$,
\begin{eqnarray*}
v(pxq+rys)&\le& \frac{1}{n} \bigg\|\bigg(p|x^*|^{2(1-\alpha)}p^*\bigg)^{\frac{n}{2}}+\bigg(r|y^*|^{2(1-\alpha)}r^*\bigg)^{\frac{n}{2}}\bigg\|\\&+&\frac{n-1}{n} \bigg(\bigg\|q^*|x|^{2\alpha}q\bigg\|^{\frac{n}{2(n-1)}} + \bigg\| s^*|y|^{2\alpha}s\bigg\|^{\frac{n}{2(n-1)}} \bigg).
 \end{eqnarray*}
 \end{theorem}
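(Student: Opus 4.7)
The plan is to specialize Lemma \ref{tla} by choosing pairs of complementary Orlicz functions tailored to the target exponents. Take $\psi_1(t)=\psi_3(t)=\tfrac{t^n}{n}$ and $\psi_2(t)=\psi_4(t)=\tfrac{n-1}{n}t^{n/(n-1)}$; these are complementary because $\tfrac{1}{n}+\tfrac{n-1}{n}=1$, and substituting gives, for any $f\in\mathcal{S(A)}$,
\begin{eqnarray*}
|f(pxq+rys)| &\le& \tfrac{1}{n}\bigl(f(p|x^*|^{2(1-\alpha)}p^*)\bigr)^{n/2}+\tfrac{n-1}{n}\bigl(f(q^*|x|^{2\alpha}q)\bigr)^{n/(2(n-1))}\\
&& +\,\tfrac{1}{n}\bigl(f(r|y^*|^{2(1-\alpha)}r^*)\bigr)^{n/2}+\tfrac{n-1}{n}\bigl(f(s^*|y|^{2\alpha}s)\bigr)^{n/(2(n-1))}.
\end{eqnarray*}

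Next, I would handle the two classes of summands separately. The elements $p|x^*|^{2(1-\alpha)}p^*$ and $r|y^*|^{2(1-\alpha)}r^*$ are self-adjoint and positive (each has the form $AA^*$ via functional calculus on $|x^*|$, $|y^*|$), so Corollary \ref{m02} applied with $a=\mathbf{1}$ and exponent $n/2\ge 1$ yields
\[\bigl(f(p|x^*|^{2(1-\alpha)}p^*)\bigr)^{n/2}\le f\bigl((p|x^*|^{2(1-\alpha)}p^*)^{n/2}\bigr),\]
and analogously for the $r,y$ term. Adding these two, linearity of $f$ and $|f(B)|\le\|B\|$ on self-adjoint $B$ convert the sum into the single norm $\bigl\|(p|x^*|^{2(1-\alpha)}p^*)^{n/2}+(r|y^*|^{2(1-\alpha)}r^*)^{n/2}\bigr\|$, matching the first term of the claim. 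For the remaining two summands I would simply use $0\le f(B)\le\|B\|$ for positive $B$ and raise to the power $n/(2(n-1))$, producing $\|q^*|x|^{2\alpha}q\|^{n/(2(n-1))}+\|s^*|y|^{2\alpha}s\|^{n/(2(n-1))}$ (no convexity lemma is needed here since only a scalar inequality is used). Assembling and then taking $\sup_{f\in\mathcal{S(A)}}$ on the left gives exactly $v(pxq+rys)$ on the left and the required bound on the right.

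The main obstacle is really a matching problem rather than a deep one: one has to identify \emph{which} complementary pair makes the exponents in the upper bound come out as $n/2$ and $n/(2(n-1))$, and to verify that the constraint $n\ge 2$ is what makes the factor $n/2$ exceed $1$ (so that Corollary \ref{m02} applies in the right direction to absorb two $f$-evaluations into one norm). A minor bookkeeping point is checking that $|x^*|^{2(1-\alpha)}$ makes sense for invertible $x$ via polar decomposition and the identity \eqref{furuta}, which was already used to derive Lemma \ref{l426} feeding into Lemma \ref{tla}; no new functional-calculus input is required here.
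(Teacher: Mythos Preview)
Your proof is correct and follows essentially the same route as the paper: the paper also specializes Lemma \ref{tla} with $\psi_1=\psi_3=\tfrac{t^n}{n}$ and $\psi_2=\psi_4=\tfrac{n-1}{n}t^{n/(n-1)}$, then applies Corollary \ref{m02} (using $n/2\ge 1$) to combine the first pair of terms under a single norm, bounds the second pair by $\|\cdot\|^{n/(2(n-1))}$ via $|f(\cdot)|\le\|\cdot\|$, and takes the supremum over $\mathcal{S(A)}$.
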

\begin{proof}
   Putting $\psi_1(t)=\frac{t^n}{n}=\psi_3(t)$, $\psi_2(t)=\frac{n-1}{n}t^{\frac{n}{n-1}}=\psi_4(t)$ for $t\ge0$ and $n\ge 2$  in Lemma \ref{tla}, we get
   \begin{eqnarray*}
          &&  \Big|f\big((pxq+rys)\big)\Big|\\
 &\le &\frac{1}{n} \Bigg(\Big( f\big(p|x^*|^{2(1-\alpha)}p^*\big)\Big)^{\frac{n}{2}}+\Big(f\big(r|y^*|^{2(1-\alpha)}r^*\big)\Big)^{\frac{n}{2}}   \Bigg)\\&& +\frac{n-1}{n}\Bigg(  \Big(f\big(q^*|x|^{2\alpha}q\big)\Big)^{\frac{n}{2(n-1)}}
+\Big(f\big(s^*|y|^{2\alpha}s\big)\Big)^{\frac{n}{2(n-1)}}\Bigg)\end{eqnarray*}
\begin{eqnarray*}
 &\le &\frac{1}{n} \Bigg( f\Big(\big(p|x^*|^{2(1-\alpha)}p^*\big)^{\frac{n}{2}}\Big)+f\Big(\big(r|y^*|^{2(1-\alpha)}r^*\big)^{\frac{n}{2}}\Big)   \Bigg)\\&& +\frac{n-1}{n}\Bigg(  \Big\|q^*|x|^{2\alpha}q\Big\|^{\frac{n}{2(n-1)}}
+\Big\|s^*|y|^{2\alpha}s\Big\|^{\frac{n}{2(n-1)}}\Bigg)\end{eqnarray*}\begin{eqnarray*}
&=&\frac{1}{n} \Bigg( f\Big(\big(p|x^*|^{2(1-\alpha)}p^*\big)^{\frac{n}{2}}+\big(r|y^*|^{2(1-\alpha)}r^*\big)^{\frac{n}{2}}\Big)   \Bigg)\\&& +\frac{n-1}{n}\Bigg(  \Big\|q^*|x|^{2\alpha}q\Big\|^{\frac{n}{2(n-1)}}
+\Big\|s^*|y|^{2\alpha}s\Big\|^{\frac{n}{2(n-1)}}\Bigg)\\
&\le&\frac{1}{n} \Big\|\big(p|x^*|^{2(1-\alpha)}p^*\big)^{\frac{n}{2}}+\big(r|y^*|^{2(1-\alpha)}r^*\big)^{\frac{n}{2}}  \Big\|\\&& +\frac{n-1}{n}\Bigg(  \Big\|q^*|x|^{2\alpha}q\Big\|^{\frac{n}{2(n-1)}}
+\Big\|s^*|y|^{2\alpha}s\Big\|^{\frac{n}{2(n-1)}}\Bigg).
   \end{eqnarray*}
    Where Corollary \ref{m02} and the fact that $|f(y)|\le \|y\|$ for all $y\in \mathcal{A}$ are used to get the second inequality. Now, taking  supremum over $f\in 
  \mathcal{S}\mathcal{(A)}$, we get the desired result.
\end{proof}
In  the following example we show that the bound obtained in Theorem $\ref{kit28}$ is better than that given in \cite[Theorem 2]{kittaneh2005numerical}.
    \begin{example}
   Consider  $\mathcal{A}=\mathcal{B}(H) $, $A=\begin{pmatrix}
    \frac{1}{2} & 0\\
    0 & \frac{1}{3}
\end{pmatrix} $, $B=\begin{pmatrix}
    3 & 0\\
    0 & 4
\end{pmatrix} $, $ C=\begin{pmatrix}
    \frac{1}{2} & 0\\
    0 & \frac{1}{4}
\end{pmatrix} $, 
    $D=\begin{pmatrix}
    3 & 0\\
    0 & 5
\end{pmatrix} $ and $T=S=\begin{pmatrix}
    1 & 0\\
    0 & 1
\end{pmatrix} $.
Then Theorem 2 in \cite{kittaneh2005numerical} gives $w(AB+CD)\le  \frac{5929}{288}$, whereas for $n=3$ Theorem \ref{kit28} gives $w(AB+CD)\le  \frac{65+40\sqrt{5}}{12}$.
    \end{example}

    \section*{Declarations}

    \textbf{Conflict of interest} There is no competing interest.\\
 \textbf{Availability of data}    Not applicable.

\bibliographystyle{plain}
\bibliography{BiB}

\begin{thebibliography}{10}

\bibitem{aujla2011refinements}
JS~Aujla, Sever~S Dragomir, M~Khosravi, and MS~Moslehian.
\newblock Refinements of choi--davis--jensen’s inequality.
\newblock {\em Bull. Math. Anal. Appl.}, 3, 2011.

\bibitem{bhanja2020generalized}
Aniket Bhanja, Pintu Bhunia, and Kallol Paul.
\newblock On generalized davis-wielandt radius inequalities of semi-\uppercase{H}ilbertian space operators.
\newblock {\em Oper. Matrices}, (15(4)):1201--1225, 2021.

\bibitem{bhunia2022lectures}
Pintu Bhunia, Silvestru~Sever Dragomir, Mohammad~Sal Moslehian, and Kallol Paul.
\newblock {\em Lectures on numerical radius inequalities}.
\newblock Springer Nature, 2022.

\bibitem{bhunia2020numerical}
Pintu Bhunia, Kais Feki, and Kallol Paul.
\newblock Numerical radius inequalities for products and sums of semi-\uppercase{H}ilbertian space operators.
\newblock {\em Filomat}, (36(4)):1415--1431, 2022.

\bibitem{bhunia2021improvement}
Pintu Bhunia, Raj~Kumar Nayak, and Kallol Paul.
\newblock Improvement of a-numerical radius inequalities of semi-\uppercase{H}ilbertian space operators.
\newblock {\em Results Math.}, 76(3):120, 2021.

\bibitem{bhunia2021proper}
Pintu Bhunia and Kallol Paul.
\newblock Proper improvement of well-known numerical radius inequalities and their applications.
\newblock {\em Results Math.}, 76:1--12, 2021.

\bibitem{blackadar2006operator}
Bruce Blackadar.
\newblock {\em Operator algebras: theory of $C^*$-algebras and von Neumann algebras}, volume 122.
\newblock Springer Science \& Business Media, 2006.

\bibitem{bourhim2021numerical}
Abdellatif Bourhim and Mohamed Mabrouk.
\newblock a-numerical range on $c^*$-algebras.
\newblock {\em Positivity}, 25:1489--1510, 2021.

\bibitem{dragomir2008power}
Sever~S Dragomir.
\newblock Power inequalities for the numerical radius of a product of two operators in \uppercase{H}ilbert spaces.
\newblock {\em Research report collection}, 11(4), 2008.

\bibitem{dragomir2017generalizations}
Sever~S Dragomir.
\newblock Generalizations of buzano inequality for n-tuples of vectors in inner product spaces with applications.
\newblock {\em Tbilisi Math. J.}, (10(2)):29--42, 2017.

\bibitem{dragomir2013inequalities}
Silvestru~Sever Dragomir.
\newblock {\em Inequalities for the numerical radius of linear operators in Hilbert spaces}.
\newblock Springer, 2013.

\bibitem{el2007numerical}
Mohammad El-Haddad and Fuad Kittaneh.
\newblock Numerical radius inequalities for \uppercase{H}ilbert space operators. ii.
\newblock {\em Studia Math.}, 182:133--140, 2007.

\bibitem{feki2020note}
Kais Feki.
\newblock A note on the a-numerical radius of operators in semi-\uppercase{H}ilbert spaces.
\newblock {\em Archiv der Mathematik}, 115(5):535--544, 2020.

\bibitem{feki2022some}
Kais Feki and Fuad Kittaneh.
\newblock Some new refinements of generalized numerical radius inequalities for \uppercase{H}ilbert space operators.
\newblock {\em Mediterranean Journal of Mathematics}, 19(1):17, 2022.

\bibitem{furuta2001invitation}
Takayuki Furuta.
\newblock {\em Invitation to linear operators: From matrices to bounded linear operators on a Hilbert space}.
\newblock CRC Press, 2001.

\bibitem{lndenstrauss1996numerical}
Lior~Tzafriri Joram~Lindenstrauss.
\newblock {\em Classical Banach Spaces I and II, Sequence spaces, Function spaces}.
\newblock Printed in Germany, 1996.

\bibitem{kittaneh2005numerical}
Fuad Kittaneh.
\newblock Numerical radius inequalities for \uppercase{H}ilbert space operators.
\newblock {\em Studia Math.}, 168(1):73--80, 2005.

\bibitem{lindenstrauss}
J.~Lindenstrauss and L.~Tzafriri.
\newblock {\em Classical Banach Spaces I and II, Sequence spaces, Function spaces}.
\newblock Springer-Verlag Berlin Heidelberg, Printed in Germany, 1996.

\bibitem{mabrouk2023extension}
Mohamed Mabrouk and Ali Zamani.
\newblock An extension of the a-numerical radius on $c^*$-algebras.
\newblock {\em Banach J. Math. Anal.}, 17(3):42, 2023.

\bibitem{majhi2022improvements}
Amit Maji, Atanu Manna, and Ram Mohapatra.
\newblock Orlicz extension of numerical radius inequalities.
\newblock {\em arXiv:2207.01915v2[math.FA]}, 2024.

\bibitem{sattari2015some}
Mostafa Sattari, Mohammad~Sal Moslehian, and Takeaki Yamazaki.
\newblock Some generalized numerical radius inequalities for \uppercase{H}ilbert space operators.
\newblock {\em Linear Algebra Appl.}, 470:216--227, 2015.

\bibitem{zamani2019numerical}
Ali Zamani.
\newblock A-numerical radius inequalities for semi-\uppercase{H}ilbertian space operators.
\newblock {\em Linear Algebra Appl.}, 578:159--183, 2019.

\bibitem{zamani2019characterization}
Ali Zamani.
\newblock Characterization of numerical radius parallelism in $c^*$-algebras.
\newblock {\em Positivity}, 23(2):397--411, 2019.

\end{thebibliography}

\end{document}